\newtheoremstyle{mystyle}{}{}{\rmfamily}%
{}{\normalfont\bfseries}{ }{ }{}
\theoremstyle{mystyle}
\newtheorem{theorem}{Theorem}[section]
\newtheorem*{theorem*}{Theorem}
\newtheorem{definition}[theorem]{Definition}
\newtheorem{corollary}[theorem]{Corollary}
\newtheorem{proposition}[theorem]{Proposition}
\newtheorem{remark}[theorem]{Remark}
\newtheorem{question}[theorem]{Question}
\newtheorem{example}[theorem]{\bf Example}
\newcommand{\keywords}[1]{\par\textbf{Keywords:} #1\par}
\newcommand{\subjclass}[1]{\par\noindent\textbf{MSC}: #1\par}
\title{
Dense Lineable Criterion for Linear Dynamics
}
\author[1]{Alexander Arbieto}
\author[2]{Manuel Saavedra}
\affil[1,2]{Instituto de Matemática, Universidade Federal do Rio de Janeiro, RJ, Brazil.}
\affil[1]{\texttt{arbieto@im.ufrj.br}}
\affil[2]{\texttt{saavmath@im.ufrj.br}}
\date{}
\begin{document}
\maketitle
\setlength{\headheight}{14.49998pt}
\begin{centering}
    \textit{``Puesta la mano en el pecho, resuelvo por fin obedecer tus leyes.''} — Ramiro Mendoza S., \\
  \hspace{4.9cm} grandfather of the first author, who dedicated his life to poetry.
\end{centering}

\begin{abstract}
	We study Li-Yorke chaos for sequences of continuous linear operators from an \(F\)-space to a normed space. We introduce the \emph{D-phenomenon} to establish a common dense lineable criterion that encompasses properties such as recurrence, universality, and Li-Yorke chaos. We show that in every infinite-dimensional separable complex Banach space, there exists a sequence of operators with a dense set of irregular vectors but without a dense irregular manifold, and we exhibit a recurrent operator whose set of recurrent vectors is not dense-lineable. This resolves in the negative a question posed by Grivaux et al.
\end{abstract}

\keywords{Dense lineability, Li-Yorke Chaos, Hypercyclicity, Recurrence, Quasi-rigidity.}
\subjclass{47A16, 37B20, 37B02}

\maketitle

\section{Introduction} 

Let \( X \) be a topological vector space (TVS). A subset \( A \subset X \) is said to be \textit{lineable} if \( A \cup \{0\} \) contains an infinite-dimensional vector subspace of \( X \). Moreover, \( A \) is called \textit{dense lineable} if \( A \cup \{0\} \) contains a dense vector subspace of \( X \). These concepts were introduced and extensively studied in \cite{gurariy2004lineability, aron2005lineability, seoane2006chaos}. For a comprehensive treatment of these and related notions, we refer to the works of L. Bernal et al. \cite{bernal2014lineability, bernal-linear}.

We introduce a motivating problem by recalling key aspects of quasi-rigid operators. Recently, the notion of recurrent operators was introduced by Costakis et al. \cite{Cos, Cos2}. Let \(X\) be an \(F\)-space. An operator \(T \in \mathcal{L}(X)\) is said to be \textit{recurrent} if the set of recurrent vectors for \(T\), denoted by \(\mathrm{Rec}(T)\), is dense in \(X\). Specifically, a vector \(x \in X\) is called recurrent for \(T\) if there exists a sequence \((\omega_n)_{n \in \mathbb{N}}\) with \(\omega_n \uparrow \infty\) such that \(\lim_{n} T^{\omega_n}x = x\). Furthermore, a continuous linear operator \(T \in \mathcal{L}(X)\) is said to be \textit{quasi-rigid} \cite{grivaux2025questions,manuel} if there exists a sequence \((\theta_n)_{n \in \mathbb{N}}\) with \(\theta_n \uparrow \infty\) such that the set \(\{x \in X : \lim_{n} T^{\theta_{n}}x = x\}\)
is dense in \(X\). Notably, if \(T\) is quasi-rigid, then \(\mathrm{Rec}(T)\) is dense lineable.

\begin{question}
Let \(X\) be an infinite dimensional separable \(F\)-space, and let \(\Lambda \subset \mathcal{L}(X)\) be a countable collection of quasi-rigid operators. Is it true 
\[
\bigcap_{T \in \Lambda} \mathrm{Rec}\left(T\right)
\]
is dense lineable?
\end{question}

As a consequence of our main Theorem \ref{mainthm}, we can affirmatively answer our motivating problem. Moreover, we can assert the existence of a dense vector subspace \(E \subset X\) such that, for each \(T \in \Lambda\), there exists a sequence \((k_{n,T})_{n} \uparrow \infty\) satisfying 
\begin{align*}
    E \subset \{x \in X : \lim_{n \rightarrow \infty} T^{k_{n,T}}x = x\}, \quad \forall\; T \in \Lambda.
\end{align*}

Recall that a continuous linear operator \( T: X \to X \) is said to be \textit{hypercyclic} if there exists a vector \( x \in X \) such that the set \( \{T^n x : n \in \mathbb{N} \} \) is dense in \( X \). The set of such vectors is denoted by \( \mathrm{HC}(T) \). 

On the other hand, a sequence of continuous linear operators \( (T_n: X \to Y)_{n \in \mathbb{N}} \) is called \textit{universal} if there exists a vector \( x \in X \) such that the set \( \{T_n x : n \in \mathbb{N} \} \) is dense in \( Y \). The set of such vectors is denoted by \( \mathrm{HC}((T_n)_{n \in \mathbb{N}}) \).

A significant distinction arises between studying the hypercyclicity of a single operator \( T \) and the universality of a sequence \( (T_{n})_{n} \). While the set of hypercyclic vectors \( \mathrm{HC}(T) \) is always dense in the hypercyclic case, the set \( \mathrm{HC}\left( (T_{n})_{n} \right) \) may not necessarily be dense in the universal case. This difference is highlighted by Godefroy and Shapiro \cite[pp. 234]{godefroy1991operators}.

Herrero \cite{herrero1991limits} and Bourdon \cite{bourdon1993invariant} demonstrated that if an operator \( T \) is hypercyclic on a complex Banach space, then the set of hypercyclic vectors \( \mathrm{HC}(T) \) is dense-lineable. For real Banach spaces, this result was extended by Bès \cite{bes1999invariant}, while for more general settings, such as topological vector spaces, Wengenroth \cite{wengenroth2003hypercyclic} further generalized the result.  In 1999, L. Bernal \cite{bernal1999densely} established two results concerning universal sequences and their connection to lineability and dense-lineability. Specifically, if \( X \) is a topological vector space, \( Y \) is a metrizable TVS, and \( (T_n : X \to Y)_{n \in \mathbb{N}} \) is hereditary hypercyclic, then \( \mathrm{HC}\left( (T_n)_{n} \right)\) is lineable. Moreover, if \( X, Y \) are separable metrizable spaces and \( (T_n : X \to Y)_{n \in \mathbb{N}} \) is densely hereditary hypercyclic (DHHC), then \( \mathrm{HC}\left( (T_n)_{n} \right) \) is dense-lineable. Recall that \((T_{n})\) is DHHC whenever every subsequence \(\mathrm{HC}((T_{n_{k}})_{k})\) is dense.

L. Bernal and Grosse-Erdmann established several equivalences related to the hypercyclicity criterion for a sequence of operators.

\begin{theorem}[Theorem 2.2 and Remark 2.3, \cite{bernal2003hypercyclicity}]\label{DHHC}
	Let \( X \) and \( Y \) be two separable \( F \)-spaces, and let \( (T_n)_{n} \subset \mathcal{L}(X, Y) \). Then, the following assertions are equivalent:
	\begin{enumerate}
		\item \( (T_n)_{n} \) satisfies the hypercyclicity criterion.
		\item \( (T_n)_{n} \) admits a densely hereditary hypercyclic subsequence.
		\item For each \( N \in \mathbb{N} \), the \( N \)-fold direct sum \( T_n \oplus \cdots \oplus T_n \) is densely hypercyclic.
	\end{enumerate}
\end{theorem}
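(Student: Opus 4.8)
The plan is to route every notion through topological transitivity, exploiting that \(X\) and \(Y\), being separable \(F\)-spaces, are second-countable Baire spaces; hence Birkhoff's transitivity theorem applies to any sequence \((S_n)\subset\mathcal{L}(X,Y)\): such a sequence is universal if and only if it is topologically transitive, and in that case \(\mathrm{HC}((S_n)_n)\) is a dense \(G_\delta\). In particular ``hypercyclic'' and ``densely hypercyclic'' coincide throughout, so I may work exclusively with the return sets \(N(U,V):=\{n:S_n(U)\cap V\neq\emptyset\}\). Since \(Y\) is a nontrivial Hausdorff TVS it has no isolated points, and a standard argument shows that transitivity forces \(N(U,V)\) to be infinite for all nonempty open \(U\subset X\), \(V\subset Y\); I will use this to keep index choices strictly increasing. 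The first structural fact I would record is that, for a subsequence \((T_{n_k})_k\), being densely hereditarily hypercyclic is equivalent to the mixing condition: for all nonempty open \(U,V\) one has \(T_{n_k}(U)\cap V\neq\emptyset\) for all large \(k\). Indeed, if some sub-subsequence of indices avoided \(N(U,V)\) it would fail Birkhoff transitivity, so DHHC forces \(N(U,V)\cap\{n_k\}\) to be cofinite; the converse is immediate.

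With this in hand, \((1)\Rightarrow(2)\) is quick. If the criterion holds along \((n_k)\) with dense sets \(X_0,Y_0\) and maps \(S_k\), then for open \(U,V\) I pick \(x_0\in X_0\cap U\) and \(y_0\in Y_0\cap V\) and test the vectors \(z_k:=x_0+S_ky_0\): since \(S_ky_0\to0\) and \(T_{n_k}S_ky_0\to y_0\) while \(T_{n_k}x_0\to0\), for large \(k\) we get \(z_k\in U\) and \(T_{n_k}z_k\in V\). Thus \((T_{n_k})_k\) is mixing, hence DHHC by the previous paragraph, giving \((2)\). For \((2)\Rightarrow(3)\) I use mixing coordinatewise: given open boxes \(U_1\times\cdots\times U_N\) and \(V_1\times\cdots\times V_N\), choose \(k\) large enough that \(T_{n_k}(U_i)\cap V_i\neq\emptyset\) simultaneously for every \(i\); selecting one preimage in each coordinate shows \((T_{n_k}^{\oplus N})_k\) is transitive, a fortiori \((T_n^{\oplus N})_n\) is transitive, and Birkhoff upgrades this to dense hypercyclicity.

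The substantive direction is \((3)\Rightarrow(1)\), which I would prove by a telescoping diagonal construction; this is the step I expect to be the main obstacle. Fix dense sequences \((x_l)\subset X\), \((y_l)\subset Y\) and a summable tolerance \((\varepsilon_k)\). I would build indices \(n_1<n_2<\cdots\) together with approximations \(a_j^{(k)}\) (future elements of \(X_0\)) and lift vectors \(b_j^{(k)}\) as follows: at stage \(k\), transitivity of \(T_n^{\oplus 2k}\) supplies a single index \(n_k>n_{k-1}\) (infinitely many are available by the perfectness remark) and a point realizing, for every \(j\le k\) simultaneously, that \(a_j^{(k)}\) lies near both \(x_j\) and \(a_j^{(k-1)}\) with \(T_{n_k}a_j^{(k)}\) near \(0\), and that \(b_j^{(k)}\) lies near \(0\) with \(T_{n_k}b_j^{(k)}\) near \(y_j\). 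The crux is to choose the successive corrections \(a_j^{(k)}-a_j^{(k-1)}\) so small that they are negligible under the already-fixed operators \(T_{n_1},\dots,T_{n_{k-1}}\); this is possible because only finitely many continuous operators must be respected at each stage, and it is precisely here that one needs transitivity of direct sums of \emph{every} order \(N\), since stage \(k\) must discharge \(k\) independent source tasks at once.

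Passing to the limits \(a_j:=\lim_k a_j^{(k)}\) yields a dense set \(X_0=\{a_j\}\) on which \(T_{n_k}a_j\to0\): splitting \(T_{n_k}a_j=T_{n_k}a_j^{(k)}+T_{n_k}(a_j-a_j^{(k)})\), the first term is small by construction and the tail term is controlled because every later correction was chosen summably small with respect to the fixed operator \(T_{n_k}\). Meanwhile \(Y_0=\{y_j\}\) together with \(S_k y_j:=b_j^{(k)}\) satisfies \(S_ky_j\to0\) and \(T_{n_k}S_ky_j\to y_j\), the lift part requiring no limit since the maps \(S_k\) are allowed to depend on \(k\). This is exactly the Hypercyclicity Criterion along \((n_k)\), closing the cycle \((1)\Rightarrow(2)\Rightarrow(3)\Rightarrow(1)\). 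The one place demanding genuine care is the tail estimate in this last paragraph, where the perturbations introduced at every future stage must be shown not to destroy the smallness already arranged at the finitely many earlier indices.
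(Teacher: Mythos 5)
This theorem is quoted by the paper from \cite{bernal2003hypercyclicity} without proof, so your attempt can only be measured against the standard argument of Bernal and Grosse-Erdmann, which it essentially reproduces: the cycle \((1)\Rightarrow(2)\Rightarrow(3)\Rightarrow(1)\), the identification of densely hereditarily hypercyclic subsequences with the ``mixing along the subsequence'' condition, and a Bès--Peris-style diagonal construction for \((3)\Rightarrow(1)\) in which stage \(k\) invokes transitivity of a direct sum whose order grows with \(k\) and the successive corrections are taken small with respect to the finitely many operators already fixed. The outline is sound, and the acknowledged crux (the telescoping tail estimate) is handled by exactly the right device. One sentence in your opening paragraph is, however, false and should be deleted: for a \emph{sequence} of operators, universality does not imply topological transitivity, so ``hypercyclic'' and ``densely hypercyclic'' do \emph{not} coincide --- this is precisely the Godefroy--Shapiro phenomenon that the paper itself recalls in its introduction \cite{godefroy1991operators}. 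Fortunately the error is not load-bearing: density is built into items (2) and (3) of the statement, and everywhere you invoke Birkhoff you only need the two correct implications (transitive \(\Rightarrow\) residual set of universal vectors, by Baire and second countability; densely universal \(\Rightarrow\) transitive, trivially, by testing a universal vector inside \(U\)). A second, minor point: the ``infinitely many admissible indices'' claim at stage \(k\) must be derived from the \(2k\)-fold direct sum acting into \(Y^{2k}\) (legitimate, since (3) gives dense universality there and \(Y^{2k}\) has no isolated points), not by intersecting the infinite return sets of the individual coordinates, which need not have infinite intersection; your phrasing already points at the direct sum, but this is the place where a careless reading would go wrong, and it is also the precise point where the hypothesis ``for every \(N\)'' in (3) is consumed.
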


F. León and V. Müller \cite{FernandoLeon2006} further developed the relationships between items \((1)\) and \((3)\) of the previous theorem, as well as additional implications, in the context of non-complete spaces. They also explored connections with dense-lineability. 

Recently, A. López and Q. Menet \cite{lopez2025two} provided a negative answer to an open problem proposed by S. Grivaux et al. \cite[Question 6.1]{grivaux2023questions}. The authors demonstrated that every infinite-dimensional separable complex Banach space admits a recurrent operator \(T\) for which the set of recurrent vectors, \(\mathrm{Rec}(T)\), is not dense-lineable.

An important notion in the study of linear dynamics is \textit{Li--Yorke chaos}. This concept was originally introduced by Li and Yorke \cite{li-yorke} in their seminal work on interval maps.

Let \((Z, d)\) be a metric space, and let \(f \colon Z \to Z\) be a continuous map. We say that a pair \((p, q) \in Z \times Z\) is a \textit{Li--Yorke pair} for \(f\) if:
\[
\liminf_{n \to \infty} d(f^n(p), f^n(q)) = 0 \quad \text{and} \quad \limsup_{n \to \infty} d(f^n(p), f^n(q)) > 0.
\]
A subset \(\Gamma \subset Z\) is called a \textit{scrambled set} for \(f\) if, for every pair of distinct points \(x, y \in \Gamma\), the pair \((x, y)\) is a Li--Yorke pair for \(f\). 

The map \(f\) is said to be \textit{Li--Yorke chaotic} if there exists an uncountable scrambled set for \(f\).

Bermúdez et al. \cite{bermudez2011li} investigated Li-Yorke chaos for operators on Banach spaces. They proved that Li-Yorke chaos is equivalent to the existence of irregular vectors and established sufficient conditions under which the set of irregular vectors is dense-lineable. These results were later extended by Bernardes et al. to the setting of Fréchet spaces \cite{bernardes2015li}. It is worth mentioning that Bernardes et al. \cite{bernardes2020mean} conducted a systematic study of Mean Li-Yorke Chaos, establishing criteria and sufficient conditions for the existence of a dense mean irregular manifold, followed by the work of \cite{jiang2025chaos}.

Irregular vectors have become a central topic in linear dynamics. In \cite{bernardes2013distributional}, Bernardes et al. studied distributional chaos in the context of Fréchet spaces. They introduced the \textit{Distributional Chaos Criterion}, which was shown to be equivalent to the existence of a distributionally irregular vector. Moreover, they provided conditions under which the set of uniform distributionally irregular vectors is dense-lineable. Recently, Conejero et al. \cite{conejero2016distributionally} explored distributional chaos for sequences of operators \((T_n)_{n} \in \mathcal{L}(X, Y)\), where \(X\) and \(Y\) are Fréchet spaces.

In this paper, motivated by the notion of universality, we introduce the concept of Li-Yorke chaos for sequences of continuous linear operators \((T_{n}: X \to Y)\), where \(X\) is an \(F\)-space and \(Y\) is a normed space, in Section 2. In Section 3, we introduce the notion of the \(D\)-phenomenon, study several properties, and review aspects of Furstenberg families. In Section 4, we establish a common dense-lineable criterion for sequences of operators. Additionally, we provide a negative answer to the open problem proposed by Grivaux et al. \cite[Problem 6.1]{grivaux2023questions}, and we also address \cite[Problem 1]{bernardes2015li} for sequences of operators, providing a negative answer.

\section{Li-Yorke Chaos for Sequences of Operators} \label{Section 2}

In this section, \(X\) denotes an \(F\)-space (that is, a completely metrizable topological vector space), \(Y\) a normed space, and \((T_{n} \colon X \to Y)_{n}\) a sequence of continuous linear operators.

A key notion in the study of Li--Yorke chaos, which can naturally be extended to sequences of operators, is that of irregular vectors.

\begin{definition}
    We say that a vector \(x \in X\) is \textit{irregular} for \((T_{n})_{n}\) if 
    \begin{align}
        \liminf_{n \to \infty} \Vert T_{n}(x) \Vert = 0 \quad \text{and} \quad 
        \limsup_{n \to \infty} \Vert T_{n}(x) \Vert = \infty.
    \end{align}
\end{definition}

In \cite{bermudez2011li, bernardes2015li}, it is established that, for a continuous linear operator \(T\) on a Banach or Fréchet space \(X\), the existence of semi-irregular vectors for \(T\) guarantees the existence of irregular vectors. Specifically, a vector \(x \in X\) is called \textit{semi-irregular} for \(T\) if it satisfies \(\liminf_{n \to \infty} \Vert{T^{n}x}\Vert = 0\) and \(\limsup_{n \to \infty} \Vert{T^{n}x}\Vert > 0\). 

\begin{example}
    Let \(X\) be a Banach space, and consider the sequence of operators \((T_{n})_{n \in \mathbb{N}} \in \mathcal{L}(X)\) defined by
    \[
        T_{n}(x) = 
        \begin{cases}
            0 & \text{if } n \text{ is odd}, \\
            x & \text{if } n \text{ is even}.
        \end{cases}
    \]
    For every nonzero vector \(x \in X \setminus \{0\}\), we observe that
    \begin{align*}
        \liminf_{n \to \infty} \Vert T_{n}(x) \Vert = 0 \quad \text{and} \quad 
        0 < \limsup_{n \to \infty} \Vert T_{n}(x) \Vert < \infty.
    \end{align*}
\end{example}

From the above example, it follows that every nonzero vector is semi-irregular for \((T_{n})_{n}\), yet no irregular vectors exist. This observation motivates the following definition of Li--Yorke chaos based on the notion of irregular vectors:

\begin{definition}
    A sequence of operators \((T_{n})_{n \in \mathbb{N}}\) is said to be \textit{Li--Yorke chaotic} if there exists an uncountable set \(\Gamma \subset X\) such that, for every pair \(p, q \in \Gamma\) with \(p \neq q\), the vector \(p - q\) is irregular for \((T_{n})_{n}\). Moreover, \((T_{n})_{n}\) is called \textit{densely Li--Yorke chaotic} if \(\Gamma\) is also dense in \(X\).
\end{definition}

The first observation is that if a sequence of operators \((T_{n})_{n}\) admits an irregular vector, then \((T_{n})_{n}\) is Li--Yorke chaotic. Indeed, if \(x\) is an irregular vector for \((T_{n})_{n}\), it suffices to consider the uncountable set \(\Gamma = \mathbb{C} \cdot x\). By construction, for every pair of distinct scalars \(\alpha, \beta \in \mathbb{C}\), the vector \(\alpha x - \beta x = (\alpha - \beta)x\) is irregular. This classical argument establishes the Li--Yorke chaoticity of \((T_{n})_{n}\).

\begin{example}
It is well known that every hypercyclic operator is Li--Yorke chaotic. Similarly, it can be shown, without much difficulty, that if a sequence of operators \((T_{n})_{n}\) is universal, then \((T_{n})_{n}\) is also Li--Yorke chaotic.
\end{example}

\begin{proposition}\label{G_de}
    Let \((T_{n})_{n \in \mathbb{N}} \subset \mathcal{L}(X, Y)\). The following sets:
    \begin{align*}
        A &= \{x \in X : \inf_{n \in \mathbb{N}} \Vert T_{n}x \Vert = 0\}, \\
        B &= \{x \in X : \sup_{n \in \mathbb{N}} \Vert T_{n}x \Vert = \infty\},
    \end{align*}
    are \(G_{\delta}\)-sets in \(X\). In particular, the set of irregular vectors for \((T_{n})_{n}\) is a \(G_{\delta}\)-set.
\end{proposition}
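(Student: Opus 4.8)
The plan rests on a single continuity observation together with the standard way of writing $G_{\delta}$-sets as countable intersections of countable unions of open sets. First I would record that, because each $T_n$ is continuous and the norm of $Y$ is continuous, every map $\varphi_n \colon X \to [0,\infty)$ given by $\varphi_n(x) = \Vert T_n x \Vert$ is continuous. Consequently each sublevel set $\{x : \varphi_n(x) < c\}$ and each superlevel set $\{x : \varphi_n(x) > c\}$ is open in $X$; this is the only topological input needed.

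For the set $A$, I would observe that $x \in A$ precisely when for every $k \in \mathbb{N}$ there is some $n$ with $\varphi_n(x) < 1/k$, which gives the representation
\[
A = \bigcap_{k \geq 1} \bigcup_{n \geq 1} \{x \in X : \Vert T_n x \Vert < 1/k\}.
\]
Each inner union is open, so $A$ is a countable intersection of open sets, hence $G_{\delta}$. The argument for $B$ is symmetric: writing the condition $\sup_n \varphi_n(x) = \infty$ as ``for every $M$ there is $n$ with $\varphi_n(x) > M$'' yields
\[
B = \bigcap_{M \geq 1} \bigcup_{n \geq 1} \{x \in X : \Vert T_n x \Vert > M\},
\]
again a $G_{\delta}$-set.

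The one point requiring a little care is that irregularity is phrased with $\liminf$ and $\limsup$ rather than $\inf$ and $\sup$, so the irregular set is not literally $A \cap B$. To handle this I would pass to tails. Since $\varphi_n(x) \geq 0$, the quantity $\inf_{n \geq N} \varphi_n(x)$ is nondecreasing in $N$ and $\liminf_n \varphi_n(x) = \sup_N \inf_{n \geq N} \varphi_n(x)$; hence $\liminf_n \Vert T_n x \Vert = 0$ holds if and only if $\inf_{n \geq N} \Vert T_n x \Vert = 0$ for every $N$. Thus $\{x : \liminf_n \Vert T_n x \Vert = 0\} = \bigcap_N A_N$, where $A_N = \{x : \inf_{n \geq N}\Vert T_n x\Vert = 0\}$ is $G_{\delta}$ by exactly the computation used for $A$ but with the union indexed by $n \geq N$. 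Symmetrically, $\{x : \limsup_n \Vert T_n x \Vert = \infty\} = \bigcap_N B_N$ with each $B_N$ a $G_{\delta}$-set. The set of irregular vectors is the intersection of these two $G_{\delta}$-sets and is therefore itself $G_{\delta}$.

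I do not anticipate any genuine obstacle here: the argument is essentially bookkeeping once continuity of the $\varphi_n$ is noted, and the only subtlety, namely the $\liminf/\limsup$ versus $\inf/\sup$ distinction, is dissolved by the monotonicity of the tail infima and suprema. I would also remark that completeness of $X$ plays no role in this statement; it becomes relevant only when these $G_{\delta}$-sets are later combined with the Baire category theorem.
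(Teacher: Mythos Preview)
Your argument is correct and follows the same decomposition as the paper: both write $A$ and $B$ as countable intersections of the open sets $\bigcup_n\{x:\Vert T_nx\Vert<1/k\}$ and $\bigcup_n\{x:\Vert T_nx\Vert>k\}$.

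The one point where you are actually more careful than the paper is the ``In particular'' clause. The paper's proof simply asserts that the set of irregular vectors equals $A\cap B$. That identification is not literally correct: a vector with $\Vert T_1x\Vert=0$ and $\Vert T_nx\Vert=n$ for $n\ge 2$ lies in $A\cap B$ but has $\liminf_n\Vert T_nx\Vert=\infty$, so it is not irregular. Your tail argument, replacing $A$ by $\bigcap_N A_N$ with $A_N=\{x:\inf_{n\ge N}\Vert T_nx\Vert=0\}$, is the right fix and still yields a $G_\delta$. (On the $B$ side no refinement is needed: since each $\Vert T_nx\Vert$ is finite, $\sup_n\Vert T_nx\Vert=\infty$ forces $\sup_{n\ge N}\Vert T_nx\Vert=\infty$ for every $N$, so $B$ already coincides with $\{x:\limsup_n\Vert T_nx\Vert=\infty\}$.) This is a minor patch rather than a different method, but it is a genuine improvement over the paper's write-up.
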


\begin{proof}
    To prove the result, we rewrite the sets \(A\) and \(B\) as countable intersections of open sets. Specifically, observe that:
    \begin{align*}
        A &= \bigcap_{k \in \mathbb{N}} \bigcup_{n \in \mathbb{N}} \{x \in X : \Vert T_{n}x \Vert < \tfrac{1}{k}\}, \\
        B &= \bigcap_{k \in \mathbb{N}} \bigcup_{n \in \mathbb{N}} \{x \in X : \Vert T_{n}x \Vert > k\}.
    \end{align*}
    Since each set \(\{x \in X : \Vert T_{n}x \Vert < \tfrac{1}{k}\}\) and \(\{x \in X : \Vert T_{n}x \Vert > k\}\) is open in \(X\), it follows that \(A\) and \(B\) are \(G_{\delta}\)-sets. Consequently, the set of irregular vectors for \((T_{n})_{n}\), which is the intersection \(A \cap B\), is also a \(G_{\delta}\)-set. This completes the proof.
\end{proof}

Bernardes et al., in \cite[Theorem 10]{bernardes2015li}, established six equivalent conditions characterizing when an operator \(T \colon X \to X\), defined on a separable Fréchet space \(X\), is densely Li--Yorke chaotic. Among these equivalences is the density of the set of irregular (or semi-irregular) vectors. This result motivates us to extend such characterizations to the context of sequences of operators, as formalized in the following theorem:

\begin{theorem} \label{equiv-den-LY}
    Let \(X\) be a separable \(F\)-space, and let \((T_{n} \colon X \to Y)_{n \in \mathbb{N}}\) be a sequence of continuous linear operators. Then the following assertions are equivalent:
    \begin{enumerate}
        \item \((T_{n})_{n}\) is densely Li--Yorke chaotic.
        \item \((T_{n})_{n}\) admits a dense set of irregular vectors.
        \item \((T_{n})_{n}\) admits a residual set of irregular vectors.
    \end{enumerate}
\end{theorem}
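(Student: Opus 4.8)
The plan is to prove the cycle $(1) \Rightarrow (2) \Rightarrow (3) \Rightarrow (1)$. Two standing facts will be used throughout: since $X$ is a separable $F$-space it is a Polish space, and since any nontrivial topological vector space has no isolated points, $X$ is moreover perfect. In addition, the set $I$ of irregular vectors for $(T_{n})_{n}$ is symmetric under negation, because $\Vert T_{n}(-x)\Vert = \Vert T_{n}x\Vert$ forces $x \in I \iff -x \in I$.

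For $(1) \Rightarrow (2)$, I would take a dense uncountable scrambled set $\Gamma$ and fix any $q_{0} \in \Gamma$. By the definition of a scrambled set, $p - q_{0}$ is irregular for every $p \in \Gamma \setminus \{q_{0}\}$. Since translation by $-q_{0}$ is a homeomorphism, $\Gamma - q_{0}$ is dense, and deleting the single point $0$ keeps it dense because $X$ is perfect; hence $(\Gamma - q_{0}) \setminus \{0\}$ is a dense set of irregular vectors. For $(2) \Rightarrow (3)$, I invoke Proposition \ref{G_de}, by which $I$ is a $G_{\delta}$-set: a dense $G_{\delta}$ in the Baire space $X$ is residual, which is exactly assertion (3).

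The substantive implication is $(3) \Rightarrow (1)$, for which I would apply Mycielski's theorem. Consider the subtraction map $\sigma \colon X \times X \to X$, $\sigma(p,q) = p - q$; it is continuous, surjective, and open, since for a basic open box $V \times W$ one has $\sigma(V \times W) = \bigcup_{w \in W}(V - w)$, a union of translates of $V$. If $I$ is residual, write $I \supseteq \bigcap_{k} U_{k}$ with each $U_{k}$ dense and open; because $\sigma$ is a continuous open surjection, each $\sigma^{-1}(U_{k})$ is again dense and open in $X \times X$, so $R := \sigma^{-1}(I) = \{(p,q) : p - q \in I\}$ is residual in $X \times X$, and it is symmetric since $I$ is. Applying Mycielski's theorem to the perfect Polish space $X$ and the residual symmetric relation $R$ then furnishes a dense subset $\Gamma \subseteq X$ homeomorphic to the Cantor set, in particular uncountable, such that $(p,q) \in R$, i.e. $p - q$ is irregular, for all distinct $p, q \in \Gamma$. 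This $\Gamma$ is the desired dense uncountable scrambled set, giving (1).

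I expect the main obstacle to be the correct deployment of Mycielski's theorem in $(3) \Rightarrow (1)$: one must verify that $R$ is genuinely residual in the product, which hinges on the openness of $\sigma$ so that preimages of dense open sets remain dense, and one must confirm that $X$ meets the perfect-Polish hypotheses of the theorem. The remaining implications are soft consequences of the Baire property of $X$ together with the $G_{\delta}$ description of $I$ from Proposition \ref{G_de}.
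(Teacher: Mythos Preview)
Your argument follows exactly the paper's route: the same cycle $(1)\Rightarrow(2)\Rightarrow(3)\Rightarrow(1)$, the same translation trick for $(1)\Rightarrow(2)$, the same appeal to Proposition~\ref{G_de} for $(2)\Rightarrow(3)$, and the same pullback of the residual set along the subtraction map followed by Mycielski's theorem for $(3)\Rightarrow(1)$.

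One small slip worth fixing: you describe the output of Mycielski's theorem as ``a dense subset $\Gamma\subseteq X$ homeomorphic to the Cantor set'', but these two properties are incompatible in a nontrivial $F$-space, since a Cantor set is compact, hence closed, and a closed dense set would equal $X$, which is connected. What Mycielski actually yields (and what the paper uses) is a \emph{Mycielski set}: a set meeting every nonempty open subset of $X$ in a Cantor set, hence dense and uncountable, but a countable union of Cantor sets rather than a single one. This does not affect the substance of your argument.
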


\begin{remark}
    In \cite[Theorem 10]{bernardes2015li}, a key implication is established: if \(T\) admits a residual set of irregular vectors, then \(T\) is densely Li--Yorke chaotic. Their proof relies on Zorn's Lemma and the construction of a \(D\)-subspace, where \(D := \mathbb{Q}\) or \(D := \mathbb{Q} + i\mathbb{Q}\), consisting (up to the zero vector) of irregular vectors for \(T\). Our approach to proving the implication \((3) \implies (1)\) in Theorem \ref{equiv-den-LY} differs from theirs.
\end{remark}

We will rely on Mycielski's Theorem \cite{mycielski1964independent, Tan}, which has been widely used in the study of nonlinear dynamical systems, particularly in the context of compact spaces \cite{Garcia, blanchard2002li, TanF, iwanik1991independence, huang2014stable, iwanik1989independent}. In the setting of linear dynamics, it has been applied by Zhen Jiang and Jian Li \cite{jiang2025chaos}. It is worth noting that these works involve Li-Yorke chaos. Beyond the context of chaoticity, M. Saavedra and M. Stadlbauer employed this theorem to establish an equivalence for quasi-rigid operators on separable \(F\)-spaces \cite[Theorem 3.3]{manuel}.

\begin{theorem}[Mycielski Theorem]
	Suppose that $X$ is a separable complete metric space without isolated points, and that for every $n\in \mathbb{N}$, the set $\mathcal{R}_{n}$ is residual in the product space $X^{n}$. Then
	there is a Mycielski set $\mathcal{K}$ in $X$ such that
	\begin{eqnarray*}
		(x_{1},x_{2}, \cdots, x_{n})\in \mathcal{R}_{n}
	\end{eqnarray*}
	for each $n\in \mathbb{N}$ and any pairwise different $n$ points $x_{1}, x_{2}, \ldots, x_{n}$ in $\mathcal{K}$.
\end{theorem}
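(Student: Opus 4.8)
The plan is to construct $\mathcal{K}$ as a \emph{Cantor scheme}: a tree of nested open sets whose infinite branches name points of $X$, arranged so that every genericity requirement coming from the sets $\mathcal{R}_n$ is built in level by level. Since each $\mathcal{R}_n$ is residual in the Polish space $X^n$, I would first fix, for every $n$, a decreasing sequence of dense open sets $G_{n,1} \supseteq G_{n,2} \supseteq \cdots$ in $X^n$ with $\bigcap_{m} G_{n,m} \subseteq \mathcal{R}_n$; it then suffices to force every tuple of distinct points of $\mathcal{K}$ into all of the $G_{n,m}$.

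The core of the construction (ignoring density for the moment) produces a single Cantor set. I would build nonempty open sets $U_s$ indexed by finite binary strings $s \in 2^{<\omega}$ satisfying, for $i \in \{0,1\}$,
\begin{align*}
\overline{U_{s^\frown i}} \subseteq U_s, \qquad \overline{U_{s^\frown 0}} \cap \overline{U_{s^\frown 1}} = \emptyset, \qquad \operatorname{diam}(U_s) < 2^{-|s|},
\end{align*}
together with the crucial \emph{genericity condition}: at each level $k$, for every $n \le k$, every $m \le k$, and every choice of $n$ pairwise distinct strings $s_1,\dots,s_n$ of length $k$,
\begin{align*}
\overline{U_{s_1}} \times \cdots \times \overline{U_{s_n}} \subseteq G_{n,m}.
\end{align*}
The construction is inductive in $k$. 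Passing from level $k$ to $k+1$, I first split each $U_s$ into two small disjoint open children with closures inside $U_s$ (possible because $X$ has no isolated points, so every open set is infinite), and then enforce the finitely many box conditions one at a time: for a task with target $G_{n,m}$, the product of the current children is a nonempty open box in $X^n$, so by density of $G_{n,m}$ it meets $G_{n,m}$, and I shrink the relevant children to a smaller open box whose closure lies in $G_{n,m}$.

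The key point making this iteration legitimate is \emph{monotonicity}: once a box $\prod_i \overline{U_{s_i}}$ has been placed inside the fixed open set $G_{n,m}$, any further shrinking of the $U_{s_i}$ keeps it there, so the finitely many tasks at level $k+1$ can be discharged sequentially without undoing one another. After the induction, completeness of $X$ together with $\operatorname{diam}(U_s) \to 0$ makes $\bigcap_k U_{\alpha|k}$ a single point $x_\alpha$ for each infinite branch $\alpha \in 2^{\omega}$, and sibling-disjointness makes $\alpha \mapsto x_\alpha$ a homeomorphism of the Cantor space onto a compact set $C$. To verify the conclusion, take $n$ distinct points $x_{\alpha_1},\dots,x_{\alpha_n}$ of $C$; their branches separate at some finite level, so for each $m$ I may choose $k \ge \max(n,m)$ at which $\alpha_1|k,\dots,\alpha_n|k$ are pairwise distinct, whence the genericity condition yields $(x_{\alpha_1},\dots,x_{\alpha_n}) \in \prod_i \overline{U_{\alpha_i|k}} \subseteq G_{n,m}$; letting $m \to \infty$ places the tuple in $\bigcap_m G_{n,m} \subseteq \mathcal{R}_n$.

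Finally, to obtain a \emph{dense} $\mathcal{K}$ — which is what forces $\mathcal{K}$ to be a countable union of Cantor sets rather than a single one — I would run the above construction simultaneously over a countable base. Fixing a base $\{W_j\}_{j}$ of $X$, I attach to each $j$ its own binary tree with root inside $W_j$, producing Cantor sets $C_j \subseteq W_j$, and I impose the genericity conditions on tuples whose entries are drawn from \emph{any} of the trees, so that mixed tuples also land in the $\mathcal{R}_n$. The only delicate issue is keeping the workload finite at each stage: at level $k$ I restrict attention to the trees $j \le k$ and to $n,m \le k$, leaving finitely many box conditions per level, after which the same monotone-shrinking argument goes through. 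Then $\mathcal{K} = \bigcup_j C_j$ is dense (it meets every $W_j$) and satisfies the required property for all $n$. I expect the main obstacle to be precisely this bookkeeping — organizing the enumeration so that only finitely many tasks are active at each level while still exhausting every pair $(n,m)$ and every tuple of distinct branches across all the trees — since the geometric Cantor-scheme step and the appeal to density of $G_{n,m}$ become routine once the order of operations is fixed.
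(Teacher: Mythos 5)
Your proposal is correct, but there is nothing in the paper to compare it against: the paper imports Mycielski's Theorem from the literature (the references \cite{mycielski1964independent, Tan}) and never proves it, using it only as a black box in Theorems \ref{equiv-den-LY}, \ref{DLYCC-produc} and \ref{mainthm}. Your argument is a faithful rendition of the standard Kuratowski--Mycielski construction: writing each residual $\mathcal{R}_n$ as $\bigcap_m G_{n,m}$ with $G_{n,m}$ dense open, building a Cantor scheme of open sets $U_s$ with shrinking diameters, disjoint sibling closures, and the level-$k$ box conditions $\overline{U_{s_1}}\times\cdots\times\overline{U_{s_n}}\subseteq G_{n,m}$ for $n,m\le k$, and then passing to branch limits via completeness. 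You correctly isolate the two points on which the whole construction turns: that the no-isolated-points hypothesis allows each node to be split, and that the box conditions are \emph{monotone} under shrinking, so the finitely many tasks at each level can be discharged sequentially. The multi-tree modification you sketch for density (one scheme rooted in each basic open set $W_j$, with genericity imposed on mixed tuples of formally distinct nodes, activating only trees $j\le k$ and indices $n,m\le k$ at stage $k$) is exactly the standard way to upgrade a single Cantor set to a Mycielski set, and the bookkeeping you flag as the remaining obstacle is indeed routine: in the verification one simply takes a stage $k$ large enough that all witnessing trees are active, $n,m \le k$, and branches sharing a tree have already separated, at which point the imposed box condition puts the tuple in $G_{n,m}$ for every $m$.
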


A set $\mathcal{K}$ is referred to as a Mycielski
set if the intersection of $\mathcal{K}$ and any nonempty open set $U$ contains a Cantor set.

\begin{proof}[Proof of Theorem \ref{equiv-den-LY}]
Assume that (1) holds. Then, there exists a dense uncountable set \(\Gamma \subset X\) such that, for any \(p, q \in \Gamma\) with \(p \neq q\), the vector \(p - q\) is irregular. Fix \(q_{0} \in \Gamma\) and define the set \(D := \{p - q_{0} : p \in \Gamma \setminus \{q_{0}\}\}\). Since \(\Gamma\) is dense and uncountable, \(D\) is a dense set of irregular vectors. This proves (2).

To show that (2) implies (3), observe that the set of irregular vectors is a \(G_\delta\)-set by Proposition \ref{G_de}. Since this set is dense, it follows that it is residual, thereby establishing (3).

Finally, assume that (3) holds, that is, the set of irregular vectors is residual. Then, there exists a sequence of dense open sets \((U_{\ell})_{\ell \in \mathbb{N}}\) in \(X\) such that the set of irregular vectors is \(\bigcap_{\ell} U_{\ell}\). For each \(\ell \in \mathbb{N}\), define the dense open set \(W_{\ell} \subset X \times X\) by
\[
W_{\ell} := \{(p, q) \in X \times X : p - q \in U_{\ell}\}.
\]
The intersection \(\bigcap_{\ell} W_{\ell}\) is residual in \(X \times X\). By Mycielski's Theorem, there exists a Mycielski set \(\mathcal{K} \subset X\) such that
\[
\mathcal{K} \times \mathcal{K} \setminus \Delta \subset \bigcap_{\ell} W_{\ell},
\]
where \(\Delta\) denotes the diagonal in \(X \times X\). Let \(\Gamma := \mathcal{K}\). Clearly, \(\Gamma\) defined in this way is an uncountable dense set of \(X\) such that, for any distinct \(p, q \in \Gamma\), we have \(p - q \in \bigcap_{\ell} U_{\ell}\). In other words, \(p - q\) is an irregular vector for the sequence \((T_{n})_{n}\). This completes the proof.
\end{proof}

\begin{remark}
The implications \((1) \Rightarrow (2)\) and \((2) \Rightarrow (3)\) in the previous statement hold without requiring the space \(X\) to be separable.
\end{remark}

\begin{definition}\label{LYCC}
A sequence of operators \((T_n \colon X \to Y)_{n \in \mathbb{N}}\) is said to satisfy the \textit{Li-Yorke Chaotic Criterion} (LYCC) if there exist a subset \(X_{0} \subset X\) and a sequence of indices \((n_{k})_{k}\) such that the following properties hold:
\begin{enumerate}
    \item The sequence \((T_{n_{k}}x)_{k}\) converges to zero for every \(x \in X_{0}\).
    \item There exists a bounded sequence \((a_{n})_{n} \subset \overline{\mathrm{span}(X_{0})}\) such that \((T_{n}a_{n})_{n} \) is unbounded.
\end{enumerate}
\end{definition}

\begin{definition}
We say that a sequence of operators \((T_{n} \colon X \to Y)_{n \in \mathbb{N}}\) satisfies the \emph{dense Li--Yorke chaos criterion} if there exists a dense subset \(X_{0} \subseteq X\) such that \(X_{0}\) satisfies properties (1) and (2) of Definition \ref{LYCC}.
\end{definition}

\begin{remark}
In Theorem \ref{equi-LYCC}, we will prove that the Li--Yorke Chaotic Criterion (LYCC) is equivalent to Li--Yorke chaos for sequences of operators. These results are natural adaptations of Definition~7 and Theorem~8 in \cite{bermudez2011li} to the sequential setting.
\end{remark}

\begin{remark}
In \cite{bernardes2015li}, a (dense) Li--Yorke Chaos criterion is established for an operator \(T \colon X \to X\), where \(X\) is a Fréchet space. Condition (1) in \ref{LYCC} is given by the requirement that \(T^{n} x\) has a subsequence converging to zero for every \(x \in X_{0}\). They prove that the (dense) Li--Yorke Chaos criterion is equivalent to (dense) Li--Yorke chaos, respectively.
\end{remark}

\begin{remark}
In Theorem \ref{DLYCC-produc}, we characterize the dense Li-Yorke Chaos criterion. However, in general, this criterion is not equivalent to densely Li--Yorke chaotic, as we will show in Theorem \ref{respuesta}. This represents a significant departure from the results obtained in \cite{bernardes2015li}.
\end{remark}

\begin{theorem}\label{equi-LYCC}
A sequence of operators \((T_n \colon X \to Y)_{n \in \mathbb{N}}\) is Li--Yorke chaotic if and only if it satisfies the Li--Yorke Chaotic Criterion (LYCC).
\end{theorem}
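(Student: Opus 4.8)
The plan is to reduce the statement to the equivalence ``\((T_n)_n\) is Li--Yorke chaotic \(\iff\) \((T_n)_n\) admits an irregular vector'', which is already available from the discussion preceding Proposition~\ref{G_de}: a single irregular vector \(x\) yields the uncountable scrambled set \(\mathbb{C}\cdot x\), and conversely any two distinct points \(p,q\) of an uncountable scrambled set produce the irregular vector \(p-q\). Thus it suffices to prove that the existence of an irregular vector is equivalent to the LYCC.

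\emph{From LYCC to an irregular vector.} Let \(X_{0}\) and \((n_k)_k\) be as in Definition~\ref{LYCC}, and set \(Z:=\overline{\mathrm{span}(X_{0})}\). Since \(Z\) is a closed subspace of the \(F\)-space \(X\), it is itself a complete metric space, hence a Baire space. I would then work inside \(Z\) with the two \(G_{\delta}\)-sets (the \(G_{\delta}\) property being exactly as in Proposition~\ref{G_de})
\[
A':=\{x\in Z:\liminf_{n}\Vert T_{n}x\Vert=0\},\qquad B':=\{x\in Z:\limsup_{n}\Vert T_{n}x\Vert=\infty\}.
\]
The set \(A'\) is residual in \(Z\): by property (1), every \(w=\sum_i c_i x_i\in\mathrm{span}(X_{0})\) satisfies \(T_{n_k}w=\sum_i c_i T_{n_k}x_i\to 0\), so \(\liminf_n\Vert T_n w\Vert=0\); hence \(A'\) contains the dense subspace \(\mathrm{span}(X_{0})\) and is a dense \(G_{\delta}\). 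For \(B'\), property (2) provides a bounded sequence \((a_n)\subset Z\) with \((T_n a_n)_n\) unbounded, which means precisely that the family \(\{T_n|_{Z}\}\) is \emph{not} equicontinuous (an equicontinuous family would be uniformly bounded on the bounded set \(\{a_n\}\)). The same holds for every tail \(\{T_n\}_{n\ge N}\), since extracting a subsequence with \(\Vert T_{p_j}a_{p_j}\Vert\to\infty\) keeps a non-equicontinuous subfamily in each tail. By the uniform boundedness principle (Banach--Steinhaus theorem) in its Baire-category form, for each \(N\) the set \(\{x\in Z:\sup_{n\ge N}\Vert T_nx\Vert=\infty\}\) is residual in \(Z\); intersecting over \(N\in\mathbb{N}\) gives exactly \(B'\), which is therefore residual. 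By Baire's theorem \(A'\cap B'\) is residual, hence nonempty, and any \(x\in A'\cap B'\subset X\) is an irregular vector for \((T_n)_n\).

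\emph{From an irregular vector to LYCC.} This direction is direct. Given an irregular vector \(x_{0}\), put \(X_{0}:=\{x_{0}\}\) and choose \((n_k)_k\) with \(T_{n_k}x_{0}\to 0\), which exists because \(\liminf_n\Vert T_n x_0\Vert=0\); this is property (1). For property (2), take \(a_n:=x_{0}\) for all \(n\): this is a bounded sequence contained in \(\overline{\mathrm{span}(X_{0})}=\mathbb{C}\,x_{0}\), and \((T_n a_n)_n=(T_n x_{0})_n\) is unbounded since \(\limsup_n\Vert T_n x_0\Vert=\infty\).

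\emph{Main obstacle.} The delicate point is establishing residuality of \(B'\). A naïve gliding-hump construction of the irregular vector runs into the difficulty that, at the ``large'' times where a new hump is placed, the images of the previously fixed humps are not controlled by property (1) (which only forces smallness along \((n_k)\)), and the operators may blow up on fixed vectors away from that subsequence. The Baire-category route circumvents this entirely, but it requires care in two places: upgrading the condensation of singularities from \(\sup_n\) to \(\limsup_n\) via the tail argument, and correctly matching the hypothesis ``\((a_n)\) bounded'' with the notion of (topological) boundedness for which non-equicontinuity, and hence the uniform boundedness principle, applies in the \(F\)-space \(Z\). Separability of \(X\) is not needed anywhere in this argument.
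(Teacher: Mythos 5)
Your proof is correct, but the hard direction (LYCC \(\Rightarrow\) existence of an irregular vector) follows a genuinely different route from the paper's. The paper keeps your easy reductions (irregular vector \(\Leftrightarrow\) Li--Yorke chaotic, and irregular vector \(\Rightarrow\) LYCC via \(X_0=\{q\}\)) but then carries out precisely the gliding-hump construction you describe as problematic: it first observes that either some vector of \(\mathrm{span}(X_0)\) is already irregular (and we are done), or else every \(u\in\mathrm{span}(X_0)\) satisfies both \(\lim_k\Vert T_{n_k}u\Vert=0\) and \(\sup_{n}\Vert T_{n}u\Vert<\infty\); this second alternative is exactly what controls the previously placed humps at the new ``large'' times, since the \(k\)-th hump is chosen with \(\Vert T_{m_k}u_k\Vert>k+\sum_{i<k}\sup_n\Vert T_nu_i\Vert\). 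So the obstacle you raise does not occur, and the payoff of the explicit construction is that it produces an irregular vector \(p=\sum_{\ell}u_{i_\ell}\) together with a subsequence \((\theta_{i_t})\) of \((n_k)\) and times \((m_{i_t})\) satisfying the quantitative bound \(\Vert T_{m_{i_t}}p\Vert>i_t-1\); this explicit data is reused verbatim in the proof of Proposition \ref{DLY to dense}. Your Banach--Steinhaus argument is shorter, makes the role of completeness transparent, and yields the stronger conclusion that the irregular vectors form a residual subset of \(\overline{\mathrm{span}(X_0)}\); it is valid provided ``bounded'' in Definition \ref{LYCC} is read as topological boundedness of \(\{a_n\}\) (which the paper's construction also needs in order to shrink scalar multiples of the \(a_n\) uniformly into small metric balls). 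Two minor points: the tail argument for \(B'\) is superfluous, since \(\limsup_n\Vert T_nx\Vert=\infty\) if and only if \(\sup_n\Vert T_nx\Vert=\infty\) (finitely many terms cannot make the supremum infinite); and if you want your argument to also support Proposition \ref{DLY to dense}, you should replace \(A'\) by \(\{x\in Z:\liminf_k\Vert T_{n_k}x\Vert=0\}\) (still a dense \(G_\delta\) in \(Z\) containing \(\mathrm{span}(X_0)\)), so that the irregular vector you obtain tends to zero along a subsequence of the prescribed sequence \((n_k)\).
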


\begin{proof}
If \((T_n)_{n \in \mathbb{N}}\) is Li-Yorke chaotic, then there exists an irregular vector \(q \in X\) for \((T_n)_n\). In this case, it suffices to take \(X_0 = \{q\}\).

Conversely, if some vector in \(X_0\) is irregular for \((T_n)_n\), the conclusion follows immediately. Otherwise, suppose that no vector in \(X_0\) is irregular. Then, for every \(u \in \mathrm{span}(X_0)\), we have:
\begin{align*}
    \lim_{k \to \infty} \Vert T_{n_k}u \Vert = 0 \quad \text{and} \quad \sup_{n \in \mathbb{N}} \Vert T_n u \Vert < \infty.
\end{align*}
By applying an inductive construction, we obtain a subsequence \((\theta_k)_k\) of \((n_k)_k\), a sequence \((m_k)_k \subset \mathbb{N}\) and a sequence of vectors \(\{u_k\}_k \subset \mathrm{span}(X_0)\) satisfying the following conditions for all \(k \in \mathbb{N}\):
\begin{itemize}
    \item \( u_{k} \in B(0, 2^{-k})\),
    \item \(\Vert T_{\theta_k}u_i \Vert < 2^{-i}k^{-1}\) for all \(1 \leq i \leq k\),
    \item For \(k > 1\),
    \[
        \Vert T_{m_k}u_k \Vert > k + \sum_{i=1}^{k-1} \sup_{n \in \mathbb{N}} \Vert T_n u_i \Vert.
    \]
\end{itemize}
Inductively, we can find a strictly increasing sequence of positive integers \((i_{\ell})_{\ell}\) such that:
\begin{itemize}
    \item \(i_{\ell+1} > \max\{m_{i_{\ell}}, \theta_{i_{\ell}}\}\), and
    \item For each \(j \in \{1, \ldots, i_{\ell}\}\),
    \begin{align*}
        \sup_{x \in B(0, 2^{-i_{\ell+1}})} \Vert{T_{j}x}\Vert < 2^{-\ell}.
    \end{align*}
\end{itemize}

Certainly, \(p := \sum_{\ell \in \mathbb{N}} u_{i_{\ell}} \in X\). We claim that the vector \(p\) is irregular for \((T_{n})_{n}\). 

We observe that 
\begin{align*}
    \Vert{T_{m_{i_{t}}}p}\Vert & \geq \Vert{T_{m_{i_{t}}}u_{i_{t}}}\Vert - \sum_{\ell < t} \Vert{T_{m_{i_{t}}}u_{i_{\ell}}}\Vert - \sum_{\ell > t} \Vert{T_{m_{i_{t}}}u_{i_{\ell}}}\Vert \\
    & > i_{t} + \sum_{s < i_{t}} \sup_{n \in \mathbb{N}} \Vert{T_{n}u_{s}}\Vert - \sum_{\ell < t} \sup_{n \in \mathbb{N}} \Vert{T_{n}u_{i_{\ell}}}\Vert - \sum_{\ell > t} 2^{-\ell} \\
    & > i_{t} - 1 \xrightarrow[t \rightarrow \infty]{} \infty
\end{align*}
On the other hand, 
\begin{align*}
    \Vert{T_{\theta_{i_{t}}}p}\Vert & \leq \sum_{\ell \leq t} \Vert{T_{\theta_{i_{t}}}u_{i_{\ell}}}\Vert + \sum_{\ell > t} \Vert{T_{\theta_{i_{t}}}u_{i_{\ell}}}\Vert  \\
    {} & \leq \frac{1}{i_{t}} \sum_{\ell \leq t} 2^{-i_{\ell}} + \sum_{\ell > t} 2^{-\ell}\\
    {} & \leq  \frac{1}{i_{t}}+ 2^{-t} \xrightarrow[t \rightarrow \infty]{}0.
\end{align*}
This completes the proof.
\end{proof}

\begin{proposition}\label{DLY to dense}
If \((T_{n}:X \rightarrow Y)_{n}\) satisfies the Dense Li-Yorke Chaotic Criterion, then it is densely Li-Yorke chaotic.
\end{proposition}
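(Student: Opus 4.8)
The plan is to reduce everything to a density statement for irregular vectors and then invoke the equivalence already proved: once the set of irregular vectors is dense, it is residual by Proposition \ref{G_de} (a dense \(G_\delta\)), and Theorem \ref{equiv-den-LY} (for which \(X\) is taken separable) upgrades this to dense Li--Yorke chaos. So the whole task becomes the following local claim: for every \(z\in X_0\) and every neighborhood \(U\) of \(z\), there is an irregular vector for \((T_n)_n\) inside \(U\). Since \(X_0\) is dense, this makes the set of irregular vectors dense.

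To prove the local claim, fix \(z\in X_0\). If \(z\) is already irregular there is nothing to do, so assume it is not. Condition (1) of the criterion gives \(T_{n_k}z\to 0\), hence \(\liminf_n\|T_n z\|=0\); since \(z\) is not irregular this forces \(M:=\sup_n\|T_n z\|<\infty\). The idea is to perturb \(z\) by a \emph{small} vector \(p\) whose annihilating subsequence lies inside \((n_k)\): concretely I would produce \(p\) with \(\|p\|\) as small as desired, with \(T_{s_j}p\to 0\) along some subsequence \((s_j)\) of \((n_k)\), and with \(\sup_n\|T_n p\|=\infty\). Granting such a \(p\), the vector \(w:=z+p\) lies in \(U\) once \(\|p\|\) is small enough, and it is irregular: along \((s_j)\) we have \(\|T_{s_j}w\|\le\|T_{s_j}z\|+\|T_{s_j}p\|\to 0\), so \(\liminf_n\|T_n w\|=0\); while \(\|T_n w\|\ge\|T_n p\|-M\) yields \(\limsup_n\|T_n w\|=\infty\). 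Subadditivity of the \(F\)-norm together with continuity of scalar multiplication is what keeps \(\|p\|\) small.

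It remains to construct the perturbation \(p\), and here the two conditions of the Dense Li--Yorke Chaotic Criterion and the machinery of Theorem \ref{equi-LYCC} enter; I would split into two cases exactly as in that proof. If some \(u_\ast\in\operatorname{span}(X_0)\) is already irregular, then by linearity of condition (1) we still have \(T_{n_k}u_\ast\to 0\), and \(p:=\lambda u_\ast\) works for any small \(\lambda\neq 0\), since scaling preserves both \(\liminf=0\) and \(\limsup=\infty\) while \(\lambda u_\ast\to 0\). Otherwise no vector of \(\operatorname{span}(X_0)\) is irregular, so every \(u\in\operatorname{span}(X_0)\) satisfies \(T_{n_k}u\to 0\) and \(\sup_n\|T_n u\|<\infty\); then the inductive construction in the proof of Theorem \ref{equi-LYCC} — driven by the bounded sequence \((a_n)\subset\overline{\operatorname{span}(X_0)}\) with \((T_n a_n)_n\) unbounded from condition (2) — produces \(p=\sum_\ell u_{i_\ell}\) with \(u_{i_\ell}\in\operatorname{span}(X_0)\), \(\|u_{i_\ell}\|<2^{-i_\ell}\) (so \(\|p\|\) is small when \(i_1\) is large), \(T_{\theta_{i_t}}p\to 0\) along a subsequence \((\theta_{i_t})\) of \((n_k)\), and \(\|T_{m_{i_t}}p\|\to\infty\).

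The main obstacle I anticipate is precisely the coordination of the two limiting behaviours of \(w=z+p\) along a \emph{single} subsequence: the perturbation's \(\liminf\)-zero indices must be drawn from the same \((n_k)\) that controls \(z\), for otherwise the estimate for \(\liminf_n\|T_n w\|\) breaks down. This is exactly what the uniformity of condition (1) over all of \(X_0\) (and hence over \(\operatorname{span}(X_0)\)) guarantees, together with the fact that the construction of Theorem \ref{equi-LYCC} keeps its annihilating indices inside \((n_k)\). The remaining points — smallness of \(p\) in the \(F\)-space topology, boundedness of \(\{T_n z\}\), and the final passage to dense Li--Yorke chaos — are then routine given Proposition \ref{G_de} and Theorem \ref{equiv-den-LY}.
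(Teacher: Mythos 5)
Your argument is correct and follows essentially the same route as the paper's proof: both perturb a non-irregular point \(z\in X_{0}\) (for which condition (1) forces \(T_{n_{k}}z\to 0\) and \(\sup_{n}\Vert T_{n}z\Vert<\infty\)) by a suitably small multiple of the irregular vector \(p\) built in the proof of Theorem \ref{equi-LYCC}, using crucially that the vanishing subsequence of \(p\) lies inside the same \((n_{k})\), and then invoke Proposition \ref{G_de} and Theorem \ref{equiv-den-LY}. The only cosmetic differences are that the paper phrases this as a contradiction and shrinks the perturbation by a scalar \(\lambda_{0}\) rather than by restarting the construction at a large index, and that you make explicit the easy case where some vector of \(\operatorname{span}(X_{0})\) is already irregular.
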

\begin{proof}
It suffices to show that the set of irregular vectors is dense, as guaranteed by Theorem \ref{equiv-den-LY}. Assume, for contradiction, that the set of irregular vectors for \((T_{n})_{n}\), denoted by \(\mathcal{I}\), is not dense in \(X\). Then, there exists a non-empty open set \(U \subset X\) such that \(U \cap \mathcal{I} = \emptyset\).

By hypothesis, there exists a dense set \(X_{0} \subset X\) and a sequence of indices \((n_{k})_{k}\) satisfying conditions (1) and (2) in Definition \ref{LYCC}. By the density of \(X_{0}\), there exists a vector \(q \in U \cap X_{0}\). Since \(q\) is not irregular, it follows that
\[
T_{n_{k}}q \xrightarrow[k \to \infty]{} 0 \quad \text{and} \quad M := \sup_{n} \Vert{T_{n}q}\Vert < \infty.
\]

In the proof of Theorem \ref{equi-LYCC}, we construct an irregular vector \(p \in X\), a subsequence \((\theta_{i_{t}})\) of \((n_{k})_{k}\), and a sequence \((m_{i_{t}})\). For any non-zero complex number \(\lambda\), we claim that the vector \(q + \lambda p\) is irregular. This follows from the following properties:
\[
T_{\theta_{i_{t}}}(q + \lambda p) \xrightarrow[t \to \infty]{} 0,
\]
and
\[
\Vert{T_{m_{i_{t}}}(q + \lambda p)}\Vert \geq \vert{\lambda}\vert(i_{t} - 1) - M \xrightarrow[t \to \infty]{} \infty.
\]

Clearly, there exists some \(\lambda_{0} \neq 0\) such that \(q + \lambda_{0}p \in U\). This contradicts the assumption that \(U \cap \mathcal{I} = \emptyset\).
\end{proof}

\begin{theorem}\label{DLYCC-produc}
Let \(X\) be a separable \(F\)-space, \(Y\) a normed space, and \((T_{n}: X \rightarrow Y)_{n} \subset \mathcal{L}(X, Y)\). Then, the following statements are equivalent:
\begin{enumerate}
    \item \((T_{n})_{n}\) satisfies the Dense Li-Yorke Chaotic Criterion.
    \item For each \(m \in \mathbb{N}\), the \(m\)-fold direct sum 
    \[
    T_{n} \oplus \cdots \oplus T_{n}: X^{m} \longrightarrow Y^{m}
    \]
    is densely Li-Yorke chaotic.
\end{enumerate}
\end{theorem}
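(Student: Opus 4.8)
The plan is to prove the two implications separately, using Proposition~\ref{DLY to dense} to reduce ``densely Li--Yorke chaotic'' for the direct sums to the \emph{dense} criterion on the product spaces, and Theorem~\ref{equiv-den-LY} to pass freely between dense Li--Yorke chaos and density of the irregular vectors. Throughout I would equip \(X^{m}\) with a product \(F\)-metric and \(Y^{m}\) with the maximum norm, so that an irregular vector \((x_{1},\ldots,x_{m})\) of \(S_{n}:=T_{n}\oplus\cdots\oplus T_{n}\) is exactly a tuple with \(\liminf_{n}\max_{j}\Vert T_{n}x_{j}\Vert=0\) and \(\limsup_{n}\max_{j}\Vert T_{n}x_{j}\Vert=\infty\); in particular its components admit a \emph{common} subsequence along which all their images tend to \(0\). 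I also record the simplification that, once \(X_{0}\) is dense, \(\overline{\mathrm{span}(X_{0})}=X\), so condition (2) of Definition~\ref{LYCC} reduces to the mere existence of a bounded sequence in the whole space with unbounded image; this part is free, since a single irregular vector \(x^{\ast}\) of \((T_{n})_{n}\) (available already from the \(m=1\) case) makes the constant sequence \(a_{n}:=x^{\ast}\) work.

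For \((1)\Rightarrow(2)\) I would start from a dense \(X_{0}\subset X\) and indices \((n_{k})_{k}\) witnessing the dense criterion and simply take \(X_{0}^{m}\subset X^{m}\), which is dense. Condition (1) for \(S_{n}\) is immediate coordinatewise, since \(S_{n_{k}}(x_{1},\ldots,x_{m})=(T_{n_{k}}x_{1},\ldots,T_{n_{k}}x_{m})\to 0\); for condition (2) I take \(\mathbf{a}_{n}:=(a_{n},0,\ldots,0)\), which is bounded with \(\Vert S_{n}\mathbf{a}_{n}\Vert=\Vert T_{n}a_{n}\Vert\) unbounded. Proposition~\ref{DLY to dense}, applied on the separable \(F\)-space \(X^{m}\), then yields that \(S_{n}\) is densely Li--Yorke chaotic for every \(m\).

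The substance is \((2)\Rightarrow(1)\), where the goal is a \emph{single} increasing sequence \((n_{k})_{k}\) and a dense set \(X_{0}\) with \(T_{n_{k}}x\to 0\) for every \(x\in X_{0}\). Each hypothesis ``the \(m\)-fold sum is densely Li--Yorke chaotic'' gives, via Theorem~\ref{equiv-den-LY} on \(X^{m}\), a dense set of irregular vectors of \(S_{n}\): for any target tuple and any tolerance I can find nearby vectors \(v_{1},\ldots,v_{m}\) and arbitrarily large \(n\) making \(\max_{j\le m}\Vert T_{n}v_{j}\Vert\) as small as desired. I would fix a sequence \((y_{j})_{j}\) in which each point of a countable dense subset of \(X\) recurs infinitely often, and build inductively indices \(n_{1}<n_{2}<\cdots\) together with stage-\(m\) approximations \(v_{1}^{(m)},\ldots,v_{m}^{(m)}\): at stage \(m\) choose an irregular vector \((v_{1}^{(m)},\ldots,v_{m}^{(m)})\) of the \(m\)-fold sum within \(\delta_{m}\) in each coordinate of \((v_{1}^{(m-1)},\ldots,v_{m-1}^{(m-1)},y_{m})\), then pick \(n_{m}>n_{m-1}\) with \(\max_{j\le m}\Vert T_{n_{m}}v_{j}^{(m)}\Vert<2^{-m}\). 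Taking \(\delta_{m}\le 2^{-m}\) small enough, using continuity of the finitely many already-frozen operators \(T_{n_{1}},\ldots,T_{n_{m-1}}\), guarantees both that \(v_{j}:=\lim_{m}v_{j}^{(m)}\) exists and that later perturbations do not spoil the earlier kills; a telescoping estimate then gives \(\Vert T_{n_{k}}v_{j}\Vert<2^{-k+1}\to 0\) for each fixed \(j\). The infinite recurrence of the \(y_{j}\) makes \(X_{0}:=\{v_{j}\}\) dense, and the free condition (2) completes the verification of the dense criterion.

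The main obstacle is exactly what this perturbative scheme is designed to overcome: a fixed, already-committed finite collection of vectors need not be jointly killable together with newly added ones, because slices of the (residual) set of irregular vectors of a higher-fold sum can degenerate. The device of re-choosing the approximations \(v_{j}^{(m)}\) at every stage, while simultaneously controlling the perturbations in \(X\) (for convergence) and under the finitely many frozen operators \(T_{n_{i}}\) (to protect earlier kills), is what amalgamates the many \(m\)-dependent killing subsequences into one. Checking that these two smallness requirements on \(\delta_{m}\) can be met at once, and that the resulting set \(\{v_{j}\}\) is genuinely dense, are the points I expect to demand the most care.
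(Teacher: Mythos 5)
Your proof is correct, and the two directions split as follows. For \((1)\Rightarrow(2)\) you do exactly what the paper does: push the dense criterion to \(X^{m}\) coordinatewise and invoke Proposition~\ref{DLY to dense}; your explicit remarks that condition (2) of Definition~\ref{LYCC} is ``free'' once \(X_{0}\) is dense (constant sequence at an irregular vector) are also implicit in the paper. The substantive direction \((2)\Rightarrow(1)\) is where you genuinely diverge. The paper defines, for each \(m\), the set \(\mathcal{R}_{m}\subset X^{m}\) of tuples admitting a common subsequence along which all coordinates are killed, notes it is a dense \(G_{\delta}\) (it contains the irregular vectors of the \(m\)-fold sum), and applies Mycielski's Theorem to get a single Mycielski set \(\mathcal{K}\) with \(\mathcal{K}^{m}\subset\mathcal{R}_{m}\) for all \(m\); any countable dense subset of \(\mathcal{K}\) then yields \(X_{0}\), and one diagonal extraction produces the common sequence \((n_{k})_{k}\). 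You instead build \(X_{0}\) and \((n_{k})_{k}\) by hand via an inductive perturbation scheme, re-approximating the already-chosen vectors at every stage and tuning \(\delta_{m}\) against both the \(F\)-metric (for convergence and density) and the finitely many frozen operators \(T_{n_{1}},\dots,T_{n_{m-1}}\) (to protect earlier kills). Your route is more elementary and self-contained — it needs only density of the irregular vectors of each \(m\)-fold sum, not Mycielski — at the cost of the delicate bookkeeping you identify; the paper's route is shorter here because Mycielski's Theorem, already stated and used elsewhere in the paper, absorbs the ``simultaneously for all \(m\)'' combinatorics into a single set \(\mathcal{K}\). Both arguments are sound.
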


\begin{proof}
If \((T_{n})_{n}\) satisfies the Dense Li-Yorke Chaotic Criterion, it is clear that for each \(m \in \mathbb{N}\), the \(m\)-fold direct sum \(T_{n} \oplus \cdots \oplus T_{n}: X^{m} \to Y^{m}\) also satisfies the criterion. According to Proposition \ref{DLY to dense}, this implies (2).

To show that (2) implies (1), it suffices to prove the existence of a sequence \((n_{k})_{k}\) and a dense subset \(X_{0} \subset X\) such that \(T_{n_{k}}x \to 0\) for every \(x \in X_{0}\).

For each \(m \in \mathbb{N}\), define \(\mathcal{R}_{m} \subset X^{m}\) as:
\[
\mathcal{R}_{m} := \{(x_{1}, \ldots, x_{m}) \in X^{m} : \exists \theta_{\ell} \uparrow \infty \; \text{such that} \; \lim_{\ell} T_{\theta_{\ell}}x_{i} = 0, \forall i \in \{1, \ldots, m\}\}.
\]
Clearly, \(\mathcal{R}_{m}\) is a \(G_{\delta}\)-set and is dense since it contains the set of irregular vectors.

By Mycielski's Theorem, there exists a Mycielski set \(\mathcal{K} \subset X\) such that \(\mathcal{K}^{m} \subset \mathcal{R}_{m}\) for every \(m \in \mathbb{N}\). Since \(X\) is separable, there exists a countable dense set \(X_{0} := \{q_{i}\}_{i \in \mathbb{N}} \subset \mathcal{K}\). We can then choose a strictly increasing sequence of positive integers \((n_{k})_{k}\) such that \(\lim_{k} T_{n_{k}}x = 0\) for every \(x \in X_{0}\).
\end{proof}

We say that \(T:X \to X\) admits a dense irregular manifold if there exists a dense subspace where every non-zero vector is irregular for \(T\); in other words, if the set of irregular vectors is dense-lineable. In \cite{bermudez2011li, bernardes2015li, jiang2025chaos}, sufficient conditions for the existence of a dense irregular manifold are established.

\begin{question}[\cite{bernardes2015li}]
Does dense Li–Yorke chaos imply the existence of a dense irregular manifold for operators on Fréchet (or Banach) spaces?
\end{question}

It is natural to ask whether a sequence of operators \((T_{n}: X \to X)_{n}\) that is densely Li-Yorke chaotic necessarily admits a dense irregular manifold. We answer this question in the negative for every infinite-dimensional separable complex Banach space \(X\) in Theorem \ref{respuesta}.

\section{D-phenomenon and Furstenberg family} \label{section 3}

In linear dynamics, there are several behaviors of interest, such as recurrence, Li-Yorke chaos, hypercyclicity, among others. Clearly, seeking a dense-lineable criterion for each of these aspects individually is not particularly motivating. In this section, we introduce the notion of the \emph{\(D\)-phenomenon}, which allows us to study the aforementioned behaviors from a unified perspective. Moreover, it enables us to establish a common dense-lineable criterion for a countable family of \(D\)-phenomena, as we will see in the next section.

\begin{definition}
	 Let \( X \) and \( Y \) be topological vector spaces. We say that \( \Psi \) is a \emph{D-phenomenon} from \( X \) to \( Y \) if, for each \( x \in X \), there exists a non-empty set \( \Psi(x) \) consisting of open subsets of \( Y \) such that the following holds: for any \( p = \sum_{j=1}^{m} \alpha_j x_j \in X \) with \(\alpha_{i}\neq 0\) and for any \( V \in \Psi(p) \), there exist non-empty open sets \( W_i \in \Psi(x_i) \) such that 
    \[
    \sum_{j=1}^{m} \alpha_j W_i \subset V.
    \]
For the \( m \)-fold product \( X^m \), we define
	\[
	\Psi(x_1, \ldots, x_m) := \left\{ \prod_{i=1}^{m} A_i \subset X^m : A_i \in \Psi(x_i) \right\}.
	\]
\end{definition}

\begin{remark}
    The choice of the term "D-phenomenon" refers to a dynamic phenomenon. Throughout this section, the connection to dynamic aspects, such as recurrence, hypercyclicity, and others, will become more apparent. 
\end{remark}

\begin{example}\label{ex-Lin}
    The following are key examples of D-phenomena in linear dynamics.

    \begin{itemize}
        \item 
        Let \(X\) be a topological vector space (TVS). For each \(x \in X\), define 
        \begin{equation}\label{Li-rec}
            \Psi^{\mathrm{Rec}}(x) := \left\{ \text{all open neighborhoods of } x \text{ in } X \right\}.
        \end{equation}

        \item  
        Let \(X\) and \(Y\) be topological vector spaces. For each \(x \in X\), define 
        \begin{equation}\label{Li-hyper}
            \Psi^{\mathrm{HC}}(x) := \left\{ \text{all non-empty open subsets of } Y \right\}.
        \end{equation}

        \item   
        Let \(X, Y\) be topological vector spaces, and let \(h \colon X \to Y\) be a continuous linear operator. For each \(x \in X\), define 
        \begin{equation}\label{Li-h}
            \Psi^{h}(x) := \left\{ \text{all open neighborhoods of } h(x) \text{ in } Y \right\}.
        \end{equation}
    \end{itemize}

    In all cases, one can verify that \(\Psi\) defines a \emph{\(D\)-phenomenon}.
\end{example}

\begin{proposition}\label{union}
    Let \(X\) and \(Y\) be topological vector spaces, and let \(\{\Psi_i\}_{i \in J}\) be a family of \emph{\(D\)-phenomena} from \(X\) to \(Y\). Define the union \(D\)-phenomenon \(\Psi_J\) as 
    \[
        \Psi_J(x) := \bigcup_{i \in J} \Psi_i(x) = \left\{ V \subseteq Y : V \in \Psi_i(x) \ \text{for some} \ i \in J \right\}.
    \]
    Then, \(\Psi_J\) is a \emph{\(D\)-phenomenon} from \(X\) to \(Y\).
\end{proposition}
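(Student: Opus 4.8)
The plan is to verify directly that $\Psi_J$ satisfies the two requirements in the definition of a $D$-phenomenon, since the statement is really an unwinding of the definition applied to one member of the family at a time. First I would dispose of the non-degeneracy condition: for each $x \in X$ the collection $\Psi_J(x)$ must be a non-empty set of open subsets of $Y$. This is immediate, because each $\Psi_i$ is a $D$-phenomenon, so every $\Psi_i(x)$ is non-empty and consists of open subsets of $Y$; hence the union $\Psi_J(x) = \bigcup_{i \in J} \Psi_i(x)$ inherits both properties.

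The substance of the argument is the decomposition property. I would fix an arbitrary $p = \sum_{j=1}^{m} \alpha_j x_j \in X$ with all $\alpha_j \neq 0$ and an arbitrary $V \in \Psi_J(p)$. The key observation is that, by the very definition of the union, membership $V \in \Psi_J(p) = \bigcup_{i \in J} \Psi_i(p)$ means $V \in \Psi_{i_0}(p)$ for some \emph{single} index $i_0 \in J$. I would then apply the $D$-phenomenon property of that one $\Psi_{i_0}$ to the same decomposition $p = \sum_{j=1}^{m} \alpha_j x_j$ and the same set $V$, obtaining non-empty open sets $W_j \in \Psi_{i_0}(x_j)$ satisfying $\sum_{j=1}^{m} \alpha_j W_j \subset V$.

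It only remains to promote these witnesses to the union phenomenon. Since $\Psi_{i_0}(x_j) \subset \Psi_J(x_j)$ for each $j$, we immediately get $W_j \in \Psi_J(x_j)$, while the inclusion $\sum_{j=1}^{m} \alpha_j W_j \subset V$ is already in hand from the previous step. This establishes the decomposition property for $\Psi_J$ and completes the verification.

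As for the main obstacle, there is essentially none beyond careful bookkeeping, but the single point that must not be glossed over is that all the witnessing sets $W_1, \ldots, W_m$ are drawn from a \emph{common} index $i_0$, namely the one for which $V \in \Psi_{i_0}(p)$. This is precisely what permits invoking the decomposition property of an individual $D$-phenomenon directly, rather than having to reconcile witnesses coming from different members of the family $\{\Psi_i\}_{i \in J}$.
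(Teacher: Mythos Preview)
Your proof is correct and follows essentially the same approach as the paper: pick the single index $i_0$ for which $V \in \Psi_{i_0}(p)$, apply the $D$-phenomenon property of $\Psi_{i_0}$, and note that the resulting witnesses $W_j \in \Psi_{i_0}(x_j)$ automatically lie in $\Psi_J(x_j)$. Your additional explicit check of non-emptiness and your emphasis on drawing all $W_j$ from a common index are fine clarifications but do not change the argument.
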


\begin{proof}
    Let \(p = \sum_{j=1}^m \alpha_j x_j\), where \(\alpha_j\) are non-zero complex numbers and \(x_j \in X\). Fix an open set \(V \in \Psi_J(p)\). By definition of \(\Psi_J\), there exists \(i \in J\) such that \(V \in \Psi_i(p)\). 

    Since \(\Psi_i\) is a \(D\)-phenomenon, there exist non-empty open sets \(W_j \in \Psi_i(x_j)\) for each \(j = 1, \ldots, m\), satisfying:
    \[
        \sum_{j=1}^m \alpha_j W_j \subseteq V.
    \]
    The key observation is that \(W_j \in \Psi_J(x_j)\) for all \(j\), as \(\Psi_J\) contains all sets from each \(\Psi_i\). Thus, \(\Psi_J\) inherits the \(D\)-phenomenon property from the family \(\{\Psi_i\}_{i \in J}\).
\end{proof}

We observe that the \(D\)-phenomena presented in (\ref{Li-rec}) and (\ref{Li-h}) of Example~\ref{ex-Lin} are associated with single operators. In contrast, the phenomenon in (\ref{Li-hyper}) corresponds to a union of \(D\)-phenomena (as in Proposition~\ref{union}), where each constituent \(D\)-phenomenon is linked to an operator in \(\mathcal{L}(X,Y)\). This raises a natural question: Is every \(D\)-phenomenon expressible as a union of operator-associated \(D\)-phenomena? The following result addresses this inquiry.

\begin{proposition}\label{prop-inverse-phenomenon}
Let \(h: X \rightarrow Y\) be a surjective operator between two TVSs \(X\) and \(Y\). For each \(x \in X\), define
\begin{equation}\label{Li-inv}
    \Psi(x) := \left\{ h^{-1}(U) \subseteq X : U \text{ is an open neighborhood of } h(x) \text{ in } Y \right\}.
\end{equation}
Then, \(\Psi\) is a \emph{\(D\)-phenomenon} from \(X\) to \(X\).
\end{proposition}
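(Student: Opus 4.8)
The plan is to verify the defining property of a $D$-phenomenon directly from the linearity and continuity of $h$, together with the joint continuity of the vector-space operations in $Y$. First I would record the basic sanity checks. Since $h$ is continuous, each set $h^{-1}(U)$ with $U$ an open neighborhood of $h(x)$ is open in $X$, and it is non-empty because it contains $x$ (as $h(x)\in U$). Hence $\Psi(x)$ is a non-empty family of open subsets of $X$ for every $x\in X$, and the phenomenon is indeed ``from $X$ to $X$'' because each $\Psi(x)$ is a family of subsets of $X$. Thus the only substantive thing to check is the decomposition condition.

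Next I would fix $p=\sum_{j=1}^{m}\alpha_j x_j$ with all $\alpha_j\neq 0$, and fix $V\in\Psi(p)$, say $V=h^{-1}(U)$ with $U$ an open neighborhood of $h(p)$ in $Y$. By linearity of $h$ one has $h(p)=\sum_{j=1}^{m}\alpha_j h(x_j)$, so $\sum_{j=1}^{m}\alpha_j h(x_j)\in U$. The key step is then to split $U$: since the map $(z_1,\ldots,z_m)\mapsto\sum_{j=1}^{m}\alpha_j z_j$ from $Y^m$ to $Y$ is continuous (the vector operations of a TVS are jointly continuous), its preimage of $U$ is an open subset of $Y^m$ containing $(h(x_1),\ldots,h(x_m))$, and therefore contains a basic open box $U_1\times\cdots\times U_m$, where each $U_j$ is an open neighborhood of $h(x_j)$ in $Y$ and $\sum_{j=1}^{m}\alpha_j U_j\subset U$.

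Finally, setting $W_j:=h^{-1}(U_j)$, each $W_j$ belongs to $\Psi(x_j)$ by definition and is non-empty since it contains $x_j$. To conclude I would check the required inclusion $\sum_{j=1}^{m}\alpha_j W_j\subset V$: any $w=\sum_{j=1}^{m}\alpha_j w_j$ with $w_j\in W_j$ satisfies $h(w)=\sum_{j=1}^{m}\alpha_j h(w_j)\in\sum_{j=1}^{m}\alpha_j U_j\subset U$ by linearity, so $w\in h^{-1}(U)=V$. This gives exactly the defining property and completes the verification.

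I do not expect any serious obstacle here: the only mildly delicate point is the box-decomposition $\sum_{j=1}^{m}\alpha_j U_j\subset U$, which is the standard consequence of the joint continuity of addition and scalar multiplication in a TVS (one may equally obtain it by iterating the two-variable case). It is worth remarking that surjectivity of $h$ is not actually used anywhere in this verification — every step relies only on continuity and linearity — so the hypothesis is presumably retained for coherence with the surrounding discussion relating $D$-phenomena to operator-associated phenomena, rather than being needed for the proof itself.
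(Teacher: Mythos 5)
Your proof is correct and follows essentially the same route as the paper's: use linearity of $h$ to place $\sum_j\alpha_j h(x_j)$ in $U$, invoke joint continuity of the vector operations in $Y$ to find neighborhoods $U_j$ of $h(x_j)$ with $\sum_j\alpha_j U_j\subset U$, and pull back by $h$. Your closing observation is also accurate — the paper's own proof never uses surjectivity either; that hypothesis only matters for the surrounding discussion (e.g.\ Corollary~\ref{unbounded}).
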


\begin{proof}
Let \(p = \sum_{i=1}^m \alpha_i x_i\) with \(\alpha_i \neq 0\) and \(x_i \in X\). Fix \(V = h^{-1}(U) \in \Psi(p)\), where \(U \subseteq Y\) is open and \(h(p) \in U\). Since \(h\) is linear and continuous, we have:
\[
h(p) = \sum_{i=1}^m \alpha_i h(x_i) \in U.
\]
By the continuity of vector addition and scalar multiplication in \(Y\), there exist open neighborhoods \(W_i \subseteq Y\) of \(h(x_i)\) such that \(\sum_{i=1}^m \alpha_i W_i \subseteq U\).

For each \(i\), define \(Z_i := h^{-1}(W_i) \in \Psi(x_i)\). Then:
	\[
	\sum_{i=1}^m \alpha_i Z_i \subseteq h^{-1}(U) = V.
	\]
	Thus, \(\Psi\) satisfies the \(D\)-phenomenon property.
\end{proof}

\begin{corollary}\label{unbounded}
    Let \(X\) be an infinite-dimensional Banach space and let \(h \in \mathcal{L}(X)\) be a surjective bounded operator that is not injective. Then, the D-phenomenon \(\Psi\) in (\ref{Li-inv}) for \(h\) is not a union of D-phenomena associated with continuous operators.
\end{corollary}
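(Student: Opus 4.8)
The plan is to exploit a structural contrast between the two kinds of phenomena. Every member of \(\Psi(x)\) is a full preimage \(h^{-1}(U)\), hence is saturated by the fibers of \(h\) and therefore \emph{unbounded} as soon as \(\ker h \neq \{0\}\); by contrast, any operator-associated phenomenon \(\Psi^{g}\) contains arbitrarily small, and in particular bounded, neighborhoods of \(g(x)\). Exhibiting a single bounded set inside some \(\Psi(x)\) will thus produce the desired contradiction.

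First I would record that the hypotheses are consistent only in infinite dimensions: a surjective non-injective \(h\) forces \(\dim X = \infty\), since in finite dimensions surjectivity implies injectivity by rank--nullity; more to the point, non-injectivity supplies a vector \(v \in \ker h\) with \(v \neq 0\). Next I would show that every set in \(\Psi(x)\) is unbounded. Indeed, if \(U\) is an open neighborhood of \(h(x)\) and \(w \in \ker h\), then \(h(x+w) = h(x) \in U\), so \(x + \ker h \subseteq h^{-1}(U)\); in particular the line \(\{x + tv : t \in \mathbb{C}\}\) lies in \(h^{-1}(U)\), and since \(\Vert x + tv \Vert \to \infty\) as \(\vert t\vert \to \infty\), the set \(h^{-1}(U)\) is unbounded.

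Then I would argue by contradiction. Suppose \(\Psi = \bigcup_{i \in J}\Psi^{g_i}\) for some family of continuous operators \(g_i \in \mathcal{L}(X)\), so that \(\Psi(x) = \bigcup_{i \in J}\Psi^{g_i}(x)\) for every \(x\). Since \(X = h^{-1}(X) \in \Psi(x)\), the collection \(\Psi(x)\) is non-empty, whence \(J \neq \emptyset\); fix any \(i_0 \in J\) and any \(x \in X\). By the definition in \((\ref{Li-h})\), \(\Psi^{g_{i_0}}(x)\) consists of all open neighborhoods of \(g_{i_0}(x)\), so the open ball \(B(g_{i_0}(x), 1)\) belongs to \(\Psi^{g_{i_0}}(x) \subseteq \Psi(x)\). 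But \(B(g_{i_0}(x),1)\) is bounded, contradicting the previous step, which showed that every element of \(\Psi(x)\) is unbounded. Hence no such family exists.

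The argument is short, so I do not expect a serious obstacle; the only points requiring care are fixing the precise meaning of \emph{union of operator-associated \(D\)-phenomena} (membership being tested at each \(x\) via \((\ref{Li-h})\)) and observing that \(\Psi^{g}(x)\) is forced to contain bounded neighborhoods precisely because it contains \emph{all} neighborhoods of \(g(x)\). It is worth remarking that surjectivity of \(h\) is used only to guarantee, via Proposition \ref{prop-inverse-phenomenon}, that \(\Psi\) is a genuine \(D\)-phenomenon; the contradiction itself rests solely on \(\ker h \neq \{0\}\).
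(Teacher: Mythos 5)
Your proposal is correct and follows essentially the same route as the paper: every set in \(\Psi(x)\) is a preimage \(h^{-1}(U)\) containing the unbounded affine subspace \(x+\ker h\), whereas any union of operator-associated phenomena must contain a bounded ball \(B(g_{i_0}(x),1)\). The paper states this contrast in one line (at \(x=0\)); you have merely supplied the details, including the correct observation that surjectivity is needed only so that \(\Psi\) is a \(D\)-phenomenon at all.
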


\begin{proof}
We can notice that every D-phenomenon generated by the union of a family of D-phenomena associated with continuous operators admits bounded open sets. In contrast,  every open set in \(\Psi(0)\) is unbounded
\end{proof}

The following result provides a characterization of how bounded sets propagate through \(D\)-phenomena, revealing that the local existence of bounded open sets implies their ubiquity:

\begin{proposition}\label{prop-boundedness}
Let \(X\) and \(Y\) be topological vector spaces, and \(\Psi\) a \(D\)-phenomenon from \(X\) to \(Y\). The following are equivalent:
\begin{enumerate}
    \item[(i)] \(\Psi(x)\) contains a bounded open set for every \(x \in X\);
    \item[(ii)] \(\Psi(q)\) contains a bounded open set for some \(q \in X\).
\end{enumerate}
\end{proposition}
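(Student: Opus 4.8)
The implication \((i) \Rightarrow (ii)\) will be immediate, since taking any \(q \in X\) (say \(q = 0\)) exhibits (i) as a special case of (ii). All the content lies in \((ii) \Rightarrow (i)\), and my plan is to transport the bounded open set guaranteed in \(\Psi(q)\) to an arbitrary point \(x \in X\) by applying the \(D\)-phenomenon property to one carefully chosen decomposition, and then using that translates of bounded sets stay bounded.

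Concretely, I would assume \(\Psi(q)\) contains a bounded open set \(V\) and fix an arbitrary \(x \in X\). The decomposition I want to feed into the definition is the two-term combination
\[
q = 1 \cdot x + 1 \cdot (q - x),
\]
whose coefficients are both equal to \(1\), hence nonzero, so that the defining property of a \(D\)-phenomenon applies to \(V \in \Psi(q)\). This yields non-empty open sets \(W_{1} \in \Psi(x)\) and \(W_{2} \in \Psi(q - x)\) with \(W_{1} + W_{2} \subset V\).

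To finish, I would pick any \(w_{2} \in W_{2}\) (possible as \(W_{2} \neq \emptyset\)) and note that \(W_{1} + w_{2} \subset W_{1} + W_{2} \subset V\), so that \(W_{1} \subset V - w_{2}\). Since a singleton is bounded and the sum of two bounded sets in a topological vector space is bounded, the translate \(V - w_{2}\) is bounded, and therefore so is its subset \(W_{1}\). Thus \(W_{1}\) is a non-empty bounded open set belonging to \(\Psi(x)\); as \(x\) was arbitrary, this establishes (i).

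I do not expect a serious obstacle; the only delicate point is the choice of decomposition. A one-term attempt \(q = \alpha x\) would force \(q\) and \(x\) to be collinear, so the auxiliary summand \(q - x\) is genuinely needed in order to reach an arbitrary \(x\), and I must make sure the coefficients in \(q = x + (q - x)\) are nonzero so that the \(D\)-phenomenon property is legitimately invoked — which they are, being equal to \(1\). The fact that translates (equivalently, sums with a singleton) preserve boundedness in a TVS is standard and can simply be cited.
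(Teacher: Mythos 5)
Your proof is correct and follows essentially the same route as the paper's: the same decomposition \(q = x + (q-x)\), the same application of the \(D\)-phenomenon property to obtain \(W_{1} + W_{2} \subset V\), and the conclusion that \(W_{1}\) is bounded. The paper simply asserts this last boundedness as ``clear,'' whereas you spell out the translation argument \(W_{1} \subset V - w_{2}\), which is a harmless (and welcome) extra level of detail.
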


\begin{proof}
The implication (i) \(\Rightarrow\) (ii) is immediate. For the converse, suppose there exists \(q \in X\) with a bounded open set \(U \in \Psi(q)\). Let \(x \in X\) be arbitrary. Decompose \(q = x + (q - x)\) and observe that:

By the \(D\)-phenomenon property, there exist open sets \(A \in \Psi(x)\) and \(B \in \Psi(q - x)\) satisfying \(A + B \subseteq U\). Clearly, this implies that \(A\) is a bounded open set. Thus \(\Psi(x)\) contains the bounded open set \(A\).
\end{proof}

\begin{definition}\label{def:bounded-D}
A \(D\)-phenomenon \(\Psi\) from \(X\) to \(Y\) is called \emph{bounded} if \(\Psi(0)\) contains a bounded open set.
\end{definition}

\begin{example}\label{ex:metric-bounded}
Let \(X\) be a Banach space. For each \(x \in X\), define
\[
\Psi(x) := \left\{ B_{X}\left(x, \tfrac{1}{n}\right) : n \in \mathbb{N} \right\},
\]  
where \(B_X(x, \tfrac{1}{n})\) denotes the open ball of radius \(1/n\) centered at \(x\). Then \(\Psi\) is a bounded \(D\)-phenomenon, refining the structure introduced in (\ref{Li-rec}).
\end{example}

This example reveals natural structural relationships between \(D\)-phenomena. To formalize this, we define a partial order on the class of \(D\)-phenomena from \(X\) to \(Y\):
\begin{itemize}
    \item For \(D\)-phenomena \(\Psi_1\) and \(\Psi_2\), we write \(\Psi_1 \prec \Psi_2\) if for every \(x \in X\) and every \(V \in \Psi_1(x)\), there exists \(W \in \Psi_2(x)\) with \(W \subseteq V\).
    \item We write \(\Psi_1 \sim \Psi_2\) if both \(\Psi_1 \prec \Psi_2\) and \(\Psi_2 \prec \Psi_1\).
\end{itemize}

A \(D\)-phenomenon \(\Psi\) is called \emph{minimal} if, whenever \(\Psi_1 \prec \Psi\) for some \(D\)-phenomenon \(\Psi_1\), it follows that \(\Psi_1 \sim \Psi\).

\begin{proposition}\label{prop:minimal-bounded}
Let \(X\) and \(Y\) be topological vector spaces, where \(Y\) has a bounded neighborhood basis. For every \(h \in \mathcal{L}(X,Y)\), the \(D\)-phenomenon \(\Psi^h\) (defined in (\ref{Li-h})) is minimal among all bounded \(D\)-phenomena under the partial order \(\prec\).
\end{proposition}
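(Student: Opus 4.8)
The plan is to first check that $\Psi^h$ itself belongs to the class of bounded $D$-phenomena, and then to establish minimality by proving the reverse comparison against any bounded competitor lying below it. Since $Y$ has a bounded neighborhood basis, the point $h(0)=0$ admits a bounded open neighborhood, and this neighborhood lies in $\Psi^h(0)$; hence $\Psi^h$ is bounded. Now suppose $\Psi_1$ is a bounded $D$-phenomenon with $\Psi_1 \prec \Psi^h$. The hypothesis already supplies one half of $\Psi_1 \sim \Psi^h$, so the entire content is to establish $\Psi^h \prec \Psi_1$; unwinding the definition of $\prec$, this amounts to showing that for every $x \in X$ and every open neighborhood $W$ of $h(x)$ there is some $V \in \Psi_1(x)$ with $V \subseteq W$. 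In other words, I must produce arbitrarily small members of $\Psi_1(x)$ clustering at $h(x)$. Two preliminary facts will be used repeatedly: by Proposition~\ref{prop-boundedness} the set $\Psi_1(z)$ contains a bounded open set for every $z$, and because $\Psi_1 \prec \Psi^h$ every $V \in \Psi_1(z)$ contains an open neighborhood of $h(z)$, so in particular $h(z) \in V$.

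The first step is to treat the base point $x=0$. I would take a bounded open set $U_0 \in \Psi_1(0)$ and apply the $D$-phenomenon property to the one-term decomposition $0 = \alpha\cdot 0$ with a scalar $\alpha \neq 0$ and target set $U_0$, obtaining $W_\alpha \in \Psi_1(0)$ with $\alpha W_\alpha \subseteq U_0$, that is $W_\alpha \subseteq \alpha^{-1}U_0$. Since $U_0$ is bounded, $\alpha^{-1}U_0$ is contained in any prescribed neighborhood of $0$ once $|\alpha|$ is large; hence $\Psi_1(0)$ contains open neighborhoods of $0$ of arbitrarily small size.

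The second, and decisive, step transports this smallness from $0$ to an arbitrary $x$. Given an open neighborhood $W$ of $h(x)$, I would use the bounded neighborhood basis of $Y$ to choose a bounded neighborhood $N$ of $0$ with $h(x)+N \subseteq W$, and then pick $V \in \Psi_1(0)$ with $V \subseteq N$ from the previous step. Applying the $D$-phenomenon property to the decomposition $0 = 1\cdot x + 1\cdot(-x)$ with target $V$ yields $W_1 \in \Psi_1(x)$ and $W_2 \in \Psi_1(-x)$ with $W_1 + W_2 \subseteq V$. Here the comparison $\Psi_1 \prec \Psi^h$ pays off: since $W_2 \in \Psi_1(-x)$, it contains $h(-x) = -h(x)$, so selecting this particular point gives $W_1 - h(x) \subseteq W_1 + W_2 \subseteq V \subseteq N$, whence $W_1 \subseteq h(x)+N \subseteq W$. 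As $W_1 \in \Psi_1(x)$, this is exactly the required small member, and letting $x$ and $W$ vary proves $\Psi^h \prec \Psi_1$, hence $\Psi_1 \sim \Psi^h$.

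I expect the main obstacle to be this transfer step rather than the scaling at the origin. A naive attempt to shrink sets directly at $x$ — for instance, feeding a bounded set from $\Psi_1(\alpha x)$ through the scaling $p = \alpha x$ — fails, because the bounded sets furnished by Proposition~\ref{prop-boundedness} at the moving points $\alpha x$ carry no uniform size control, and their required centers $\alpha h(x)$ escape to infinity when $h(x)\neq 0$. The resolution is precisely to perform the scaling only at the origin, where the center stays fixed, and then to recentre the resulting small neighborhood at $h(x)$ additively through the splitting $0 = x + (-x)$; the role of the hypothesis $\Psi_1 \prec \Psi^h$ is exactly to guarantee that the auxiliary set $W_2$ meets the translation vector $-h(x)$ needed to move $V$ onto $h(x)$.
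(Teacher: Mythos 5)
Your proof is correct and follows essentially the same route as the paper's: shrink a bounded member of $\Psi_1(0)$ via the scalar decomposition of $0$, then transfer it to $x$ through the splitting $0 = x + (-x)$, using $\Psi_1 \prec \Psi^h$ to locate $-h(x)$ in the auxiliary set. The only (harmless) additions are your explicit check that $\Psi^h$ itself is bounded and the reorganization of the shrinking step into a standalone claim at the origin.
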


\begin{proof}
To establish the minimality of \(\Psi^h\), let \(\Psi\) be any bounded \(D\)-phenomenon satisfying \(\Psi \prec \Psi^h\). We will prove that \(\Psi^h \prec \Psi\).  

Since \(\Psi \prec \Psi^h\), we observe that for every \(x \in X\), each open set in \(\Psi(x)\) contains the vector \(h(x)\).  

Fix \(q \in X\) and consider \(V \in \Psi^h(q)\). Define \(M := V - h(q)\), which is an open neighborhood of \(0\) in \(Y\).  

Since \(\Psi\) is a bounded \(D\)-phenomenon, there exists a bounded open set \(K \in \Psi(0)\), and for some \(m \in \mathbb{N}\) we have that \(K \subseteq m \cdot M\). On the other hand, since \(m \cdot 0 = 0\), there exists \(U_1 \in \Psi(0)\) with \(m \cdot U_1 \subseteq K\), implying that \(U_1 \subseteq M\).  

Writing \(0 = q + (-q)\), the \(D\)-phenomenon guarantees the existence of two open sets \(W \in \Psi(q)\) and \(U_2 \in \Psi(-q)\) such that \(W + U_2 \subseteq U_1\). Observing that \(U_2 + h(q)\) is an open neighborhood of \(0\), we obtain:
\[
W + (U_2 + h(q)) \subseteq M + h(q) = V.
\]
Thus, \(W \subseteq V\), proving that \(\Psi^h \prec \Psi\). Since \(\Psi\) was arbitrary, the minimality of \(\Psi^h\) follows.
\end{proof}

\begin{figure}[h]
\centering
\begin{forest}
for tree={edge=->, grow'=north, math content, l sep=2.5cm, s sep=1.5cm, scale=0.8}
[
  \Psi^{\mathrm{HC}} \quad (\text{Maximal}), name=max
  [
    \begin{array}{c}
      \text{Bounded } D\text{-phenomena} \\
      \text{(e.g., } \Psi \text{ de Prop.\ref{ex:metric-bounded})}
    \end{array}, 
    [
      \Psi^{h}\,(\text{Minimal}), 
    ]
    [
      \Psi^{g}\,(\text{Minimal}), 
    ]
    [
    \text{Prop.}\, \ref{D-phe. LY} , 
  ]
    [
    \begin{array}{c}
      \bullet \\
      \bullet \\
      \bullet
    \end{array},
    ]
  ]
  [
    \text{Cor.}\, \ref{unbounded} , 
  ]
  [
    \begin{array}{c}
      \bullet \\
      \bullet \\
      \bullet
    \end{array},
  ]
]
\end{forest}
\caption{Diagram of the partial order of D-phenomena}
\label{fig:D-phenomena-hierarchy}
\end{figure}
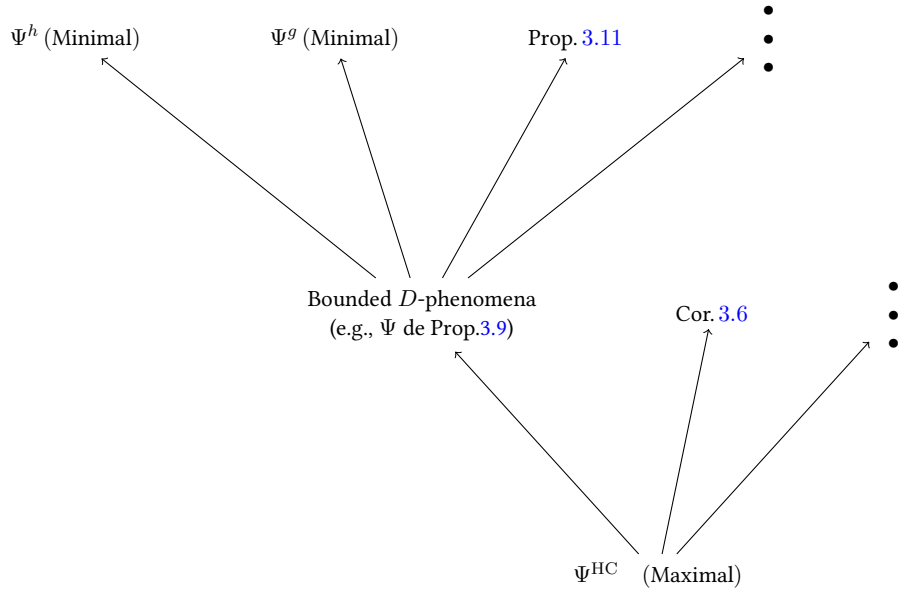

Up to this point, all studied \(D\)-phenomena have required that every open set \(V \subset Y\) in \(\Psi(0)\) contains the zero vector. The following proposition establishes a fundamentally different behavior, which is intrinsically linked to Li--Yorke chaos.

\begin{proposition}\label{D-phe. LY}
    Let \(X\) be a topological vector space and \(Y\) be a normed space. For each \(x \in X\), define
   \begin{align}
       \Psi^{\mathrm{LY}}(x) := \{B(0, r), Y \setminus B[0, s] : r,s>0\}
   \end{align}
    Then \(\Psi^{\mathrm{LY}}\) is a D-phenomenon from \(X\) to \(Y\).
\end{proposition}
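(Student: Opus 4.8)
The plan is to verify the defining property of a $D$-phenomenon directly, by treating separately the two shapes of open set that make up $\Psi^{\mathrm{LY}}$. Fix a vector $p = \sum_{j=1}^{m} \alpha_j x_j$ with every $\alpha_j \neq 0$, and fix a target $V \in \Psi^{\mathrm{LY}}(p)$; I must produce non-empty open sets $W_j \in \Psi^{\mathrm{LY}}(x_j)$ with $\sum_{j=1}^{m} \alpha_j W_j \subseteq V$. A useful first observation is that $\Psi^{\mathrm{LY}}$ does not depend on the base point, so each collection $\Psi^{\mathrm{LY}}(x_j)$ equals the same family $\{B(0,r),\, Y \setminus B[0,s] : r,s>0\}$; this symmetry lets me build the $W_j$ uniformly. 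Throughout I assume $Y \neq \{0\}$, so that the sets $Y \setminus B[0,s]$ are non-empty (and unbounded).

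For the contracting case $V = B(0,r)$, I would take $W_j := B(0, r_j)$ with $r_j := r/(2m\,|\alpha_j|)$. Since $\alpha_j B(0,r_j) = B(0,|\alpha_j|\,r_j)$ for $\alpha_j \neq 0$, the triangle inequality yields $\sum_{j=1}^m \alpha_j W_j \subseteq B\big(0, \sum_{j=1}^m |\alpha_j|\,r_j\big) = B(0, r/2) \subseteq V$, as needed.

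For the expanding case $V = Y \setminus B[0,s]$, the idea is to let one coordinate dominate. I would fix the index $j=1$, set $W_1 := Y \setminus B[0,R]$ for a large radius $R$ still to be chosen, and take $W_j := B(0,1)$ for $j \geq 2$. Then for arbitrary $w_1 \in W_1$ and $w_j \in W_j$, the reverse triangle inequality gives
\[
\Big\Vert \sum_{j=1}^{m} \alpha_j w_j \Big\Vert \;\geq\; |\alpha_1|\,\Vert w_1\Vert - \sum_{j=2}^{m} |\alpha_j|\,\Vert w_j\Vert \;>\; |\alpha_1|\,R - \sum_{j=2}^{m} |\alpha_j|.
\]
Choosing $R > \big(s + \sum_{j=2}^m |\alpha_j|\big)/|\alpha_1|$ makes the right-hand side exceed $s$, so every element of $\sum_{j=1}^m \alpha_j W_j$ has norm $> s$; that is, $\sum_{j=1}^m \alpha_j W_j \subseteq Y \setminus B[0,s] = V$.

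Together the two cases establish that $\Psi^{\mathrm{LY}}$ is a $D$-phenomenon. I expect the expanding case to be the only delicate point: unlike all the $D$-phenomena considered before, the sets $Y \setminus B[0,s]$ neither contain the origin nor are bounded, so one cannot simply shrink every factor toward $0$. The resolution is to drive a single coordinate far out via a complement-of-ball set while keeping the remaining coordinates inside fixed bounded balls, and then to control the sum by the reverse triangle inequality. This asymmetric construction is exactly what sets $\Psi^{\mathrm{LY}}$ apart structurally from the earlier (bounded, origin-containing) examples, and it is the step that requires the most care.
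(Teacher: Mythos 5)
Your proof is correct and follows essentially the same strategy as the paper's: in the contracting case $V=B(0,r)$ you shrink all factors to small balls, and in the expanding case $V=Y\setminus B[0,s]$ you let the first coordinate dominate via a complement of a closed ball and control the remaining terms with the reverse triangle inequality, exactly as in the paper (which uses $W_i=B(0,1/n)$ with $n$ large where you use $B(0,1)$ and enlarge $R$ instead). Your explicit caveat that $Y\neq\{0\}$, needed so that $Y\setminus B[0,s]$ is non-empty, is a small point of extra care not made explicit in the paper.
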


\begin{proof}
Let \(p = \sum_{i=1}^{t} \alpha_{i} x_{i}\) be any fixed vector in \(X\) with \(\alpha_{i}\neq 0\). We distinguish two cases based on the open set in \(\Psi(p)\):

\textbf{Case 1:} Consider an open set \(B(0, r) \in \Psi(p)\) for some \(r>0\). Clearly, there exists \(n \in \mathbb{N}\) such that \(\sum_{i} |\alpha_{i}| < nr\). Now, consider the open sets \(W_{i} := B(0, \frac{1}{n}) \in \Psi(x_{i})\). Then, 
\[
\sum_{i} \alpha_{i} W_{i} \subset B(0, r).
\]

\textbf{Case 2:} Consider an open set \(Y \setminus B[0, s] \in \Psi(p)\) for some \(s \in \mathbb{N}\). We choose the open set \(W_{1} := Y \setminus B[0, \ell]\), where \(\ell \in \mathbb{N}\) satisfies \(|\alpha_{1}| \ell > s + 1\). For \(i \in \{2, \ldots, t\}\), choose the open sets \(W_{i} := B(0, \frac{1}{n})\) in \(\Psi(x_{i})\) such that \(n \in \mathbb{N}\) and \(\sum_{i=2}^{t} |\alpha_{i}| < n\). Then, it follows that
\[
\sum_{i} \alpha_{i} W_{i} \subset Y \setminus B[0, s].
\]

Therefore, \(\Psi\) satisfies the requirements of a D-phenomenon.
\end{proof}

To conclude this section, we will establish connections between \(D\)-phenomena and the dynamical properties of an operator, such as recurrence, Li-Yorke chaos, among others. In order to do so, we recall some notions.

The notion of a Furstenberg family was introduced by Akin \cite{akin2013recurrence}, inspired by earlier works such as \cite{Furs, gotts}. 

\begin{definition}
	A non-empty family \(\mathcal{F}\) of subsets of \(\mathbb{N}_0\) is called a \textit{Furstenberg family} if it is hereditary upward, meaning that 
	\begin{align*}
		A \in \mathcal{F}, \, A \subset B \implies B \in \mathcal{F}.
	\end{align*}
	Moreover, a Furstenberg family is said to be \textit{proper} if it does not contain the empty set.
\end{definition}

Let \(\mathcal{F}\) be a Furstenberg family, we say \((T_{n}:X\rightarrow Y)_{n\in \mathbb{N}}\) is \(\mathcal{F}\)-universal if there exists a point \(x \in X\) such that, for any non-empty open set \(U\) in \(Y\),
\[
\{n \in \mathbb{N} \colon T_n(x) \in U\} \in \mathcal{F}.
\]
The point \(x\) is then called \(\mathcal{F}\)-universal for \((T_{n})_{n}\), and the set of such vectors is denoted by \(\mathcal{F}\mathrm{HC}((T_{n})_{n})\).

We can distinguish several central notions in the study of hypercyclic operators. More precisely, these include frequently hypercyclic, upper frequently hypercyclic, and reiteratively hypercyclic operators. These concepts have been extensively studied in the literature; see, for instance, \cite{bayart2004hypercyclicite, bayart2006frequently, bayart2015difference, bes2016recurrence, bonillaupper, Boni, charpentier2022common, ernst, grivaux2018frequently, grivaux2014invariant, menet2017linear}. It is worth noting that the notion of \(\mathcal{F}\)-hypercyclicity was introduced by Shkarin \cite[Sec. 5]{shkarin2009spectrum}.

Now, let \( X, Y \) be two topological vector spaces, \( \Psi \) a \(D\)-phenomenon from \( X \) to \( Y \), and \( \mathcal{F} \) a proper Furstenberg family. We say that \( x \in \mathcal{F}\Psi((T_{n})_{n}) \) if, for every \( V \in \Psi(x) \), 
\begin{align*}
    \{n \in \mathbb{N} : T_n(x) \in V\} \in \mathcal{F}.
\end{align*}  

We say that \( x \in \Psi((T_{n})_{n}) \) if, for every \( V \in \Psi(x) \), 
\begin{align*}
    \{n \in \mathbb{N} : T_n(x) \in V\} \neq \emptyset
\end{align*}

\begin{enumerate}
    \item \textbf{Recurrence.} Let \( X \) be an \( F \)-space and \( T \in \mathcal{L}(X) \). Considering the \( D \)-phenomenon \(\Psi^{\mathrm{Rec}}\) from \( X \) to \( X \), we have  
    \begin{align*}
        \mathcal{F}\mathrm{Rec}(T) = \mathcal{F}\Psi^{\mathrm{Rec}}(T).
    \end{align*}
    
    \item \textbf{Universality.} Let \( X \) be an \( F \)-space and \( Y \) a metrizable topological vector space, and let \((T_{n})_{n} \subset \mathcal{L}(X, Y)\). Considering the \( D \)-phenomenon \(\Psi^{\mathrm{HC}}\) from \( X \) to \( Y \), we obtain  
    \begin{align*}
        \mathcal{F}\mathrm{HC}((T_{n})_{n}) = \mathcal{F}\Psi^{\mathrm{HC}}((T_{n})_{n}).
    \end{align*}
    
    \item \textbf{Li-Yorke Chaos.} Let \( X \) be an \( F \)-space, \( Y \) a normed space, and \((T_{n})_{n} \subset \mathcal{L}(X, Y)\) a sequence of operators. Considering the \( D \)-phenomenon \(\Psi^{\mathrm{LY}}\), we have  
    \begin{align*}
        x \, \text{is irregular for} \, (T_{n})_{n} \, \text{if and only if} \, x \in \Psi^{\mathrm{LY}}((T_{n})_{n}).
    \end{align*}
\end{enumerate}

For the \( k \)-fold product, \( q = (x_1, \ldots, x_k) \in \mathcal{F}\Psi((T_{n})_{n}:k) \) if, for every \(V\in \Psi(q) \),
\begin{align}\label{k-fold}
    \{n \in \mathbb{N} : (T_n(x_1), \ldots, T_n(x_k)) \in V\} \in \mathcal{F}.
\end{align}

\begin{remark}
It is important to pay attention to (\ref{k-fold}). In the cases of recurrence and universality, we have
\begin{align*}
    \mathcal{F}\Psi^{\mathrm{Rec}}(T; k) &= \mathcal{F}\mathrm{Rec}\left(\bigoplus_{i=1}^{k} T\right), \\
    \mathcal{F}\Psi^{\mathrm{HC}}((T_{n})_{n}; k) &= \mathcal{F}\mathrm{HC}\left(\bigoplus_{i=1}^{k} T_{n}\right).
\end{align*}
However, this equality does not necessarily hold when considering \(\Psi^{\mathrm{LY}}\).
\end{remark}

\section{Commom Dense-Lineable Criterion} \label{section 4}

As reviewed in the introduction, the set \(\mathrm{HC}(T)\) is known to be dense-lineable when \(T\) is a hypercyclic operator. However, in the context of recurrent operators, certain aspects differ significantly.

Let \(X\) be an infinite-dimensional separable Banach space, and consider a recurrent operator \(T: X \to X\), known as an Augé-Tapia operator of type \(N\) \cite{Tapia, manuel}. To delve into the details, we first recall some relevant aspects of this class of operators.

By a theorem of Ovsepian and Aleksander \cite{Ovsepian}, there exists a sequence \( (e_{k}, e_{k}^{*})_{k \in \mathbb{N}} \subset X \times X^{*} \) such that \( \mathrm{span}\{e_{n} : n \in \mathbb{N}\} \) is dense in \( X \), the functionals satisfy \( e_{n}^{*}(e_{m}) = \delta_{n,m} \), each vector has norm \( \|e_{n}\| = 1 \), and the functionals are uniformly bounded, i.e., \( \sup_{n \in \mathbb{N}} \|e_{n}^{*}\| < \infty \).

Consider \( V := \mathrm{span}\{e_{1}, \ldots, e_{N}\} \), and denote by \( \mathbb{P} \) the projection from \( X \) onto \( V \), defined as
\[
\mathbb{P}(x) =\sum_{i=1}^{N} \langle e_{i}^{*}, x \rangle e_{i} 
\]

On the other hand, there exists \( F \subset V \) such that both \( F \) and \( V \setminus F \) are dense sets in \( V \) \cite{Tapia}, where
\[
\mathbb{P}^{-1}(F) = \mathrm{Rec}(T) \quad \text{and} \quad A_{T} := \mathbb{P}^{-1}(V \setminus F) = \{x \in X : \lim_{n} \|T^{n}x\| = \infty\}.
\]
In other words, \(T\) is recurrent, and \(X = \mathrm{Rec}(T) \cup A_{T}\), with both sets being dense in \(X\).

\begin{theorem}\label{rec-notdense}
Let \(X\) be an infinite-dimensional separable Banach space. Then there exists \(T \in \mathcal{L}(X)\) such that \(T\) is recurrent and \(\mathrm{Rec}(T)\) is not dense-lineable.
\end{theorem}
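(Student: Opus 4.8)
The plan is to take $T$ to be precisely the Augé–Tapia operator of type $N$ recalled just before the statement, and to exploit the fact that its set of recurrent vectors is a ``cylinder'' over a finite-dimensional subspace. Recall that $\mathrm{Rec}(T)=\mathbb{P}^{-1}(F)$, where $\mathbb{P}$ is the continuous projection onto $V=\mathrm{span}\{e_1,\dots,e_N\}$ and $F\subset V$ satisfies that both $F$ and $V\setminus F$ are dense in $V$. The decisive structural feature is that membership in $\mathrm{Rec}(T)$ depends only on the image $\mathbb{P}(x)$ inside the \emph{finite-dimensional} space $V$, and that $F$ is a proper subset of $V$.

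First I would argue by contradiction: suppose $\mathrm{Rec}(T)$ is dense-lineable, so there is a dense vector subspace $E\subseteq\mathrm{Rec}(T)$ (note that $0\in\mathrm{Rec}(T)$, so no separate union with $\{0\}$ is needed, and in particular $\mathbb{P}(0)=0\in F$). For every nonzero $x\in E$ we have $x\in\mathrm{Rec}(T)$, hence $\mathbb{P}(x)\in F$; combined with $\mathbb{P}(0)=0\in F$ this gives $\mathbb{P}(E)\subseteq F$. Since $\mathbb{P}$ is linear, $\mathbb{P}(E)$ is a linear subspace of $V$.

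Next I would use the continuity and surjectivity of $\mathbb{P}$ onto $V$: density of $E$ in $X$ yields $V=\mathbb{P}(X)\subseteq\overline{\mathbb{P}(E)}$, so $\mathbb{P}(E)$ is dense in $V$. But $\mathbb{P}(E)$ is a linear subspace of the finite-dimensional normed space $V$, hence closed; a dense closed subspace must be all of $V$, so $\mathbb{P}(E)=V$. Combining with the previous paragraph forces $V=\mathbb{P}(E)\subseteq F$, which contradicts the fact that $V\setminus F$ is dense in $V$ (in particular nonempty). This contradiction shows that no such dense subspace $E$ can exist, i.e.\ $\mathrm{Rec}(T)$ is not dense-lineable.

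The only real subtlety — and the step I expect to carry the weight — is the finite-dimensional reduction: the whole argument hinges on $\mathrm{Rec}(T)$ being a full preimage $\mathbb{P}^{-1}(F)$ under a projection with finite-dimensional range, so that any subspace it contains must project to a subspace of $V$ trapped inside $F\subsetneq V$. Everything else (closedness of finite-dimensional subspaces, continuity and surjectivity of $\mathbb{P}$) is routine. The genuinely hard content lies outside this proof, namely the existence of the Augé–Tapia operator together with a set $F$ whose complement is dense in $V$, which I would simply invoke from \cite{Tapia, manuel}.
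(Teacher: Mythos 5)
Your proposal is correct and follows essentially the same route as the paper: choose an Augé--Tapia operator, assume a dense subspace $E\subseteq\mathrm{Rec}(T)=\mathbb{P}^{-1}(F)$, and derive the contradiction $V=\mathbb{P}(E)\subseteq F\varsubsetneq V$. You merely spell out in more detail why $\mathbb{P}(E)=V$ (a dense linear subspace of the finite-dimensional space $V$ is closed, hence all of $V$), a step the paper states without elaboration.
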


\begin{proof}
Let \(T\) be an Áuge-Tapia operator of type \(N\), which is known to be recurrent. We claim that \( \mathrm{Rec}(T) \) is not dense-lineable, following the ideas in the proof of \cite[Thm. 3.11]{manuel}. 

Assume, for contradiction, that \( \mathrm{Rec}(T) \) is dense-lineable. Then there exists a dense vector subspace \( E \subset X \) such that \( E \subset \mathrm{Rec}(T) \), leading to the chain of inclusions:
\[
V = \mathbb{P}(E) \subset \mathbb{P}(\mathrm{Rec}(T)) = F \varsubsetneq V,
\]
which is a contradiction.
\end{proof}

The previous result provides a negative answer to the open problem proposed by Grivaux et al. \cite[Question 6.1]{grivaux2023questions}. This approach is independent of the result obtained by A. López and Q. Menet \cite[Theorem 2.1]{lopez2025two}.

\begin{theorem}\label{respuesta}
Let \(X\) be an infinite-dimensional separable complex Banach space. There exists a sequence \((T_{n})_{n} \subset \mathcal{L}(X)\) which is densely Li-Yorke chaotic but does not admit a dense irregular manifold.
\end{theorem}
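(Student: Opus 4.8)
The plan is to reuse the Augé--Tapia construction of Theorem~\ref{rec-notdense}, but now in a way that produces a \emph{densely Li--Yorke chaotic sequence} whose irregular vectors cannot form a dense manifold. The key structural fact is that for the Augé--Tapia operator \(T\) of type \(N\) we have a finite-dimensional space \(V=\operatorname{span}\{e_1,\dots,e_N\}\), a projection \(\mathbb{P}\colon X\to V\), and a decomposition \(X=\mathbb{P}^{-1}(F)\cup \mathbb{P}^{-1}(V\setminus F)\) with \(F\) and \(V\setminus F\) both dense in \(V\). The obstruction to dense lineability in Theorem~\ref{rec-notdense} came from the fact that \(\mathbb{P}\) sends the relevant set into the \emph{non-subspace} \(F\subsetneq V\). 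I would like to transplant exactly this obstruction into the Li--Yorke setting.

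First I would build a sequence \((T_n)_n\) (for instance by interleaving iterates of \(T\) with the data of the projection \(\mathbb{P}\), or by composing the iterates \(T^n\) with suitable normalizing factors) so that the set of irregular vectors for \((T_n)_n\) is governed by the position of \(\mathbb{P}(x)\) relative to \(F\). Concretely, I would aim to arrange that
\begin{align*}
\mathbb{P}\bigl(\mathcal{I}\bigr)\subseteq V\setminus F,
\end{align*}
where \(\mathcal{I}\) denotes the set of irregular vectors for \((T_n)_n\); that is, irregularity forces \(\mathbb{P}(x)\) to lie in the non-subspace \(V\setminus F\). Since \(V\setminus F\) is dense in \(V\) and \(A_T=\mathbb{P}^{-1}(V\setminus F)=\{x:\|T^nx\|\to\infty\}\), such vectors are plentiful and dense, which is what I will need for the positive half of the statement.

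Next, for the \emph{positive} direction (dense Li--Yorke chaos), rather than exhibiting a scrambled set directly I would verify the Dense Li--Yorke Chaotic Criterion and invoke Proposition~\ref{DLY to dense}. This requires a dense set \(X_0\subseteq X\) satisfying conditions (1) and (2) of Definition~\ref{LYCC}: a single index subsequence \((n_k)\) along which \(T_{n_k}x\to 0\) for all \(x\in X_0\), together with a bounded sequence in \(\overline{\operatorname{span}(X_0)}\) whose images are unbounded. The recurrence of \(T\) (giving vectors returning near themselves, hence \(T^{\omega_k}x\to x\) along some times, which after normalization yields the convergence to zero needed for (1)) and the existence of the dense set \(A_T\) on which norms blow up (supplying (2)) are precisely the two ingredients I would exploit.

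Finally, for the \emph{negative} direction (no dense irregular manifold), I would argue exactly as in Theorem~\ref{rec-notdense}: if some dense subspace \(E\subseteq X\) consisted (up to \(0\)) of irregular vectors, then applying the linear map \(\mathbb{P}\) gives \(V=\mathbb{P}(E)\subseteq \mathbb{P}(\mathcal{I})\subseteq V\setminus F\), contradicting \(0\in F\) (or more simply contradicting that \(V\setminus F\) is not a subspace). The main obstacle I anticipate is the first step: designing \((T_n)_n\) so that irregularity is \emph{equivalent} (or at least implies) membership in \(\mathbb{P}^{-1}(V\setminus F)\), while simultaneously keeping the sequence densely Li--Yorke chaotic. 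In particular I must ensure the \(\liminf\) and \(\limsup\) defining an irregular vector are controlled jointly by the two dense pieces of the decomposition, so that the blow-up set \(A_T\) contributes the \(\limsup=\infty\) behaviour and the recurrence times contribute the \(\liminf=0\) behaviour, without these interfering on a putative dense subspace.
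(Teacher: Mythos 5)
Your skeleton matches the paper's — an Augé--Tapia operator, with non-lineability extracted from the finite-rank projection $\mathbb{P}$ onto $V$ — but there are two genuine gaps. First, the construction of $(T_{n})_{n}$, which you defer as ``the main obstacle,'' is the actual content of the proof; the paper simply takes $T_{n}x := T^{n}x - x$ for an Augé--Tapia operator of type $2$. With this choice the inclusion you aim for is reversed: any irregular vector $x$ has $\liminf_{n}\Vert T^{n}x - x\Vert = 0$, hence is recurrent, so $\mathcal{I}\subseteq \mathrm{Rec}(T)=\mathbb{P}^{-1}(F)$ and $\mathbb{P}(\mathcal{I})\subseteq F$, not $V\setminus F$. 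In particular $A_{T}=\{x:\Vert T^{n}x\Vert\to\infty\}$ is \emph{disjoint} from $\mathcal{I}$ (such $x$ can never give $\liminf\Vert T_{n}x\Vert=0$); its only role is to witness that the $G_{\delta}$ set $\{x:\sup_{n}\Vert T_{n}x\Vert=\infty\}$ from Proposition \ref{G_de} is dense, so your plan to draw the irregular vectors themselves from $A_{T}$ cannot work. The obstruction argument survives the swap ($V=\mathbb{P}(E)\subseteq F\subsetneq V$, or equivalently one produces $\alpha p+\beta q\in E\cap A_{T}$ as in Theorem \ref{rec-notdense}), but as written you are pointing at the wrong half of the decomposition.

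Second, and more seriously, your route to the positive half fails. Condition (1) of Definition \ref{LYCC} with a dense $X_{0}$, applied to $(T^{n}-I)_{n}$, demands a \emph{single} subsequence $(n_{k})$ with $T^{n_{k}}x\to x$ for every $x$ in a dense set — that is, quasi-rigidity of $T$. Recurrence only supplies return times depending on $x$, and a quasi-rigid operator has $\mathrm{Rec}(T)$ dense-lineable, which Theorem \ref{rec-notdense} rules out for Augé--Tapia operators; indeed the paper explicitly offers this example as a witness that dense Li--Yorke chaos does \emph{not} imply the dense criterion, so Proposition \ref{DLY to dense} is unavailable here by design. The paper instead argues by category: $\mathrm{Rec}(T)=\{x:\inf_{n}\Vert T_{n}x\Vert=0\}$ and $A_{T}\subseteq\{x:\sup_{n}\Vert T_{n}x\Vert=\infty\}$ are both dense, the two right-hand sets are $G_{\delta}$ by Proposition \ref{G_de}, hence the set of irregular vectors is residual, and Theorem \ref{equiv-den-LY} ((3)$\Rightarrow$(1)) gives dense Li--Yorke chaos. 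You should replace your appeal to the dense criterion with this Baire-category argument.
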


\begin{proof}
Let \(T: X \to X\) be an Augé-Tapia operator of type 2, where \(V = \mathrm{span}(e_{1}, e_{2})\). Consider the sequence of continuous linear operators \((T_{n})_{n}\) on \(X\) defined by 
\[
    T_{n}: X \longrightarrow X, \quad x \longmapsto T^{n}x - x.
\]
To show that \((T_{n})_{n}\) is densely Li-Yorke chaotic, observe that
\[
    \mathrm{Rec}(T) = \{x \in X : \inf_{n \in \mathbb{N}} \|T_{n}x\| = 0\},
\]
and
\[
    A_{T} \subset \{x \in X : \sup_{n \in \mathbb{N}} \|T_{n}x\| = \infty\}.
\]
Thus, by Proposition \ref{G_de} and Theorem \ref{equiv-den-LY}, \((T_{n})_{n}\) is densely Li-Yorke chaotic.

Now, suppose that \((T_{n})_{n}\) admits a dense irregular manifold \(E \subset X\), leading to a contradiction. 

Since \(E \subset \mathrm{Rec}(T)\) and is dense, we obtain \(\mathbb{P}(E) = V\). In other words, there exist two irregular vectors \(p, q \in E\) with \(\mathrm{span}(\mathbb{P}(p), \mathbb{P}(q)) = V\). Consider a vector \(x \in V\setminus F \subset V\); then there exist two nonzero complex numbers \(\alpha\) and \(\beta\) such that \(x \in \mathbb{P}(\alpha p + \beta q)\). Thus, \(\alpha p + \beta q \in A_{T}\), which is a contradiction.
\end{proof}

 In \cite[Section 3]{grivaux2025questions}, Sophie Grivaux et al. established an elegant result for recurrent operators. Specifically, let \(X\) be an infinite-dimensional separable Banach space. For each \(N \in \mathbb{N}\), there exists \(T \in \mathcal{L}(X)\) with two key properties. The first property ensures that \(\mathrm{Rec}\left(\bigoplus_{i=1}^{N} T\right) = X^{N}\), implying that \(\mathrm{Rec}\left(\bigoplus_{i=1}^{\ell} T\right)\) is dense lineable for every \(\ell \in \{1, \ldots, N\}\). The second property states that \(\bigoplus_{i=1}^{N+1} T\) is not recurrent, meaning \(\mathrm{Rec}\left(\bigoplus_{i=1}^{N+1} T\right)\) is not dense in \(X^{N+1}\). In other words, such a set cannot be dense-lineable. The authors also show that this result holds for \(\mathcal{AP}\mathrm{Rec}(T)\); see \cite[Corollary 4.1]{grivaux2025questions}.

\begin{theorem}\label{mainthm}
	Let \( X \) be an infinite-dimensional separable \(F\)-space, \( (Y_{i})_{i\in \mathbb{N}} \) TVSs, and a family of sequences of linear operators \(\left((T_{i,n}: X \to Y_{i})_{n \in \mathbb{N}}\right)_{i \in \mathbb{N}} \). Suppose \( (\Psi_{i})_{i\in \mathbb{N}} \) is a family of D-phenomena from \( X \) to \( Y_{i} \), and let \( (\mathcal{F}_{i})_{i\in \mathbb{N}} \) be a family of proper Furstenberg family. If, for every \( m, i \in \mathbb{N} \), the set
	\begin{align}\label{resi}
		\mathcal{F}_{i}\Psi_{i}((T_{i,n})_{n}: m)
	\end{align}
	contains a residual subset of \( X^m \), then 
	\begin{align*}
\bigcap_{i\in \mathbb{N}}\mathcal{F}_{i}\Psi_{i}((T_{i,n})_{n}: m)
	\end{align*}
	is dense-lineable for each \( m\in \mathbb{N} \).
\end{theorem}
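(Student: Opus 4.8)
The plan is to construct a dense subspace $E \subset X$ lying (apart from the origin) inside the intersection $\bigcap_{i} \mathcal{F}_i \Psi_i((T_{i,n})_n : m)$ for every $m$, by building a countable basis vector by vector. Since each set in~\eqref{resi} is residual in $X^m$ and $X$ is separable and completely metrizable, the countable intersection $\mathcal{R} := \bigcap_{i \in \mathbb{N}} \mathcal{F}_i \Psi_i((T_{i,n})_n : m)$ is itself residual in $X^m$ for each fixed $m$ (a countable intersection of residual sets is residual). First I would fix a countable dense subset $\{z_j\}_{j \in \mathbb{N}}$ of $X$ to guide the density requirement, and then inductively select vectors $v_1, v_2, \ldots$ so that: (a) each finite tuple of distinct $v_j$'s lands in the appropriate residual set $\mathcal{R}$ for the relevant product dimension $m$; and (b) the $v_j$'s are chosen close enough to the guiding points $z_j$ that $E := \operatorname{span}\{v_j : j \in \mathbb{N}\}$ is dense. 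The span $E$ will then be the desired dense subspace.

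The central mechanism is the $D$-phenomenon defining property: if $p = \sum_{j=1}^{k} \alpha_j v_j$ with all $\alpha_j \neq 0$ and $V \in \Psi_i(p)$, then there exist $W_j \in \Psi_i(v_j)$ with $\sum_j \alpha_j W_j \subset V$. I would use this to transfer the membership $(v_1, \ldots, v_k) \in \mathcal{F}_i \Psi_i((T_{i,n})_n : k)$ to membership of the linear combination $p$ in $\mathcal{F}_i \Psi_i((T_{i,n})_n)$. Concretely, for a tuple in the product set, the hitting times $\{n : (T_{i,n}v_1, \ldots, T_{i,n}v_k) \in \prod_j W_j\}$ lie in $\mathcal{F}_i$; since $T_{i,n}$ is linear, on this set of times one has $T_{i,n} p = \sum_j \alpha_j T_{i,n} v_j \in \sum_j \alpha_j W_j \subset V$, so $\{n : T_{i,n}p \in V\} \supseteq$ that time set, and upward heredity of the Furstenberg family $\mathcal{F}_i$ forces $\{n : T_{i,n}p \in V\} \in \mathcal{F}_i$. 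Thus every nonzero $p \in E$ belongs to $\mathcal{F}_i \Psi_i((T_{i,n})_n)$ for all $i$, which is exactly the $m=1$ conclusion; the analogous argument applied coordinatewise to products gives the statement for every $m$.

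For the inductive construction itself I would proceed as follows. Having chosen $v_1, \ldots, v_{k-1}$, I want to pick $v_k$ near $z_k$ (say within distance $2^{-k}$) such that \emph{every} tuple of distinct vectors drawn from $\{v_1, \ldots, v_k\}$ of any length $\ell \le k$ lies in the corresponding residual set $\mathcal{R}_\ell \subset X^\ell$. Because residual sets are dense, and intersecting $\mathcal{R}_k$ (as a residual subset of $X^k$) with the slice obtained by fixing the first $k-1$ coordinates at $v_1, \ldots, v_{k-1}$ may fail to be residual in the remaining coordinate, the cleaner route is to invoke Mycielski's Theorem directly: since $\mathcal{R}_m$ is residual in $X^m$ for every $m$, there is a Mycielski set $\mathcal{K} \subset X$ with $(x_1, \ldots, x_m) \in \mathcal{R}_m$ for any $m$ distinct points of $\mathcal{K}$, and by the definition of Mycielski set $\mathcal{K}$ meets every nonempty open set, so one can extract a countable dense sequence $\{v_j\} \subset \mathcal{K}$. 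Then $E = \operatorname{span}\{v_j\}$ is dense, and any nonzero element, being a combination of distinct generators, reduces via the $D$-phenomenon transfer above to the desired membership.

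The main obstacle I anticipate is the linear-independence bookkeeping in the transfer step: to apply the $D$-phenomenon one needs the representation $p = \sum \alpha_j v_j$ to have all coefficients nonzero \emph{and} the generators distinct, which is automatic for a basis but requires that the $v_j$ chosen from $\mathcal{K}$ be linearly independent; I would secure this either by thinning $\{v_j\}$ to a linearly independent dense subset (possible since $X$ is infinite-dimensional), or by noting that the product-set conclusion~\eqref{k-fold} is stated for distinct tuple entries and carefully matching the dimension $m$ of the product to the number of distinct generators appearing in $p$. A secondary subtlety is that the paper's remark warns $\mathcal{F}\Psi^{\mathrm{LY}}((T_n)_n : k)$ need not coincide with the $k$-fold direct sum, so I must work with the intrinsic product definition~\eqref{k-fold} throughout rather than reducing to direct sums, and verify that the Mycielski set simultaneously handles all product dimensions.
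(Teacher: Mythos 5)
Your treatment of the case $m=1$ is correct and coincides with the paper's own proof: the countable intersection of the residual sets in \eqref{resi} is residual, Mycielski's theorem yields a set $\mathcal{K}\subset X$ all of whose tuples of distinct points lie in the relevant product sets, one extracts a countable dense linearly independent family $\{v_j\}\subset\mathcal{K}$, and membership of a nonzero combination $p=\sum_j\alpha_j v_j$ is transferred from the tuple $(v_1,\ldots,v_k)$ via the $D$-phenomenon property, linearity of $T_{i,n}$, and upward heredity of $\mathcal{F}_i$.

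The gap is the case $m\ge 2$, which you dispatch with ``the analogous argument applied coordinatewise.'' The theorem asserts dense lineability of a subset of $X^m$, so you must exhibit a dense subspace of $X^m$, and the natural candidate $E^m$ with $E=\operatorname{span}\{v_j\}$ fails in two ways. First, a nonzero element of $E^m$ may have zero coordinates, e.g.\ $(v_1,0,\ldots,0)$; checking its membership requires controlling $T_{i,n}(0)=0$ against sets $A_j\in\Psi_i(0)$, and $\Psi_i(0)$ need not consist of neighborhoods of $0$ (for $\Psi^{\mathrm{LY}}$ it contains $Y\setminus B[0,s]$). Second, even with all coordinates nonzero, an element such as $(v_1,v_1)$ lies on the diagonal, which Mycielski's theorem excludes, and one cannot reduce to the $m=1$ case because a Furstenberg family is only upward hereditary, not closed under finite intersections: knowing $\{n:T_{i,n}v_1\in A_1\}\in\mathcal{F}_i$ and $\{n:T_{i,n}v_1\in A_2\}\in\mathcal{F}_i$ separately does not put the simultaneous hitting set in $\mathcal{F}_i$. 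The paper's resolution, which your proposal is missing, is to build for each $\ell$ a family of $\ell$ \emph{pairwise disjoint} countable dense subsets $E_1,\ldots,E_\ell\subset\mathcal{K}\setminus\{0\}$ whose union is linearly independent, and to take $\operatorname{span}\left(E_1\times\cdots\times E_\ell\right)$ as the dense subspace of $X^\ell$. A nonzero element $q=\sum_{k=1}^{m}a_k(z_{k,1},\ldots,z_{k,\ell})$ of this span then has each coordinate equal to a combination with nonzero coefficients of distinct points of $\mathcal{K}$, and the full $(m\cdot\ell)$-tuple $(z_{k,j})_{k,j}$ consists of pairwise distinct points of $\mathcal{K}$, hence lies in the residual set for $X^{m\ell}$; your transfer argument then runs coordinate by coordinate. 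You did flag the need to ``match the dimension of the product to the number of distinct generators,'' but without the disjoint-dense-sets device that matching cannot actually be carried out.
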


Before proceeding with the proof of our main statement, the first point to note is the condition in (\ref{resi}) regarding the sets containing a residual set. This raises an evident question: is this condition reasonable? The following two remarks aim to address this question, suggesting that it is indeed well-justified.

\begin{remark}
In \cite{bernal1999densely}, L. Bernal considers a sequence of operators \(T_{n}: X \to Y\), where \(X\) and \(Y\) are separable and metrizable spaces. Using an elegant technique, the author demonstrates that if the sequence \((T_{n})_{n}\) is densely hereditary hypercyclic (DHHC), then the set \(\mathrm{HC}((T_{n})_{n})\) is dense-lineable. This result is achieved by inductively leveraging the density of \(\mathrm{HC}((T_{n_{j}})_{j})\) for each subsequence \((n_{j})\). 

Now, consider a family of sequences of operators \(\left((T_{i,n}: X \to Y)_{n \in \mathbb{N}}\right)_{i \in \mathbb{N}}\), where for each \(i \in \mathbb{N}\), the sequence \((T_{i,n})_{n}\) is DHHC. A natural question arises: is the set 
\begin{align}\label{dense-HC}
    \bigcap_{i\in \mathbb{N}}\mathrm{HC}\left((T_{i,n})_{n}\right)
\end{align}
dense-lineable? 

When attempting to replicate Bernal's ideas in this context, a difficulty arises in the \(j\)-th inductive step. Specifically, we cannot guarantee that the set 
\[
\bigcap_{i \in \mathbb{N}}\mathrm{HC}((T_{i, p_{i}(j,n)})_{n})
\]
is dense. However, this issue can be resolved if \(X\) is additionally assumed to be complete. In such a case, Theorem \ref{DHHC} ensures that for each \(m \in \mathbb{N}\), the \(m\)-fold product of \((T_{n})_{n}\) is DHHC. Moreover, in \cite{godefroy1991operators, Bes-Peris--Hereditarily-Hypercyclic-Operators--JFA-1999}, it is shown that the density of hypercyclic vectors implies that they form a residual set. In other words, \(\mathrm{HC}((T_{i,n})_{n}:m)\) is residual for each \(i, m\); that is, this corresponds to our condition (\ref{resi}). Following the approach of L. Bernal, the Baire category theorem guarantees that the set \( \bigcap_{i \in \mathbb{N}} \mathrm{HC}((T_{i, p_{i}(j,n)})_{n}) \) is dense. Hence, we can affirmatively answer (\ref{dense-HC}) for each \(m\)-fold product using L. Bernal's technique. In conclusion, Theorem \ref{mainthm} ensures a common dense-lineability without strongly relying on the universality property of the sequence \((T_{i,n})_{n}\) for each \(i\).
\end{remark}

\begin{remark}
Recently, in \cite{grivaux2025questions, manuel}, the following result was independently established. Let \(X\) be a separable \(F\)-space and let \(T: X \to X\) be a continuous linear operator. Then, \(T\) is quasi-rigid if and only if, for each \(n \in \mathbb{N}\), \(\bigoplus_{i=1}^{n} T\) is recurrent. This can be reformulated as follows: \(T\) is quasi-rigid  if and only if for each \(n\in \mathbb{N}\), the set \(\mathrm{Rec}(\bigoplus_{i=1}^{n} T)\) is residual. Hence, condition (\ref{resi}) becomes evident. 
\end{remark}

\begin{proof}[Proof of Theorem \ref{mainthm}]
Since for each \( m\in \mathbb{N} \) the following set
\begin{align*}
    \bigcap_{i\in \mathbb{N}} \mathcal{F}_{i}\Psi_{i}\left((T_{i,n})_{n}: m\right)
\end{align*}
contains a residual subset of \(X^{m}\). According to Mycielski Theorem, there exists a Mycielski set \(\mathcal{K} \subset X\) such that for each \( m \in \mathbb{N} \), we have the following inclusion:
\begin{align*}
\mathcal{K}^{m} \setminus \Delta_{m} \subset  \bigcap_{i\in \mathbb{N}} \mathcal{F}_{i}\Psi_{i}\left((T_{i,n})_{n}: m\right),
\end{align*} 	
where \(\Delta_{m}\) denotes the diagonal in \( X^{m} \).

For any fixed \(\ell \in \mathbb{N}\), we observe that for any non-empty open set \(U \subset X\), it is possible to choose linearly independent vectors \(\{q_{1}, \ldots, q_{\ell}\} \subset U \cap \mathcal{K}\).

Now, consider \(\{V_{m}\}_{m \in \mathbb{N}}\), a countable basis of open sets in \(X\). For \(V_{1}\), we can select \(\{x_{1,1}, \ldots, x_{1,\ell}\} \subset V_{1} \cap \mathcal{K}\) as linearly independent vectors. Next, for \(V_{2}\), note that the set \(V_{2} \setminus \mathrm{span}(\{x_{1,t}\}_{t=1}^{\ell})\) is a non-empty open set since \(X\) is infinite-dimensional. Consequently, we can choose \(\{x_{2,1}, \ldots, x_{2,\ell}\} \subset V_{2} \setminus \mathrm{span}(\{x_{1,t}\}_{t=1}^{\ell}) \cap \mathcal{K}\) as linearly independent vectors. Proceeding inductively, we obtain 
\[
\{x_{m,j}: m \in \mathbb{N}, \, j \in \{1, \ldots, \ell\}\},
\]
which are linearly independent vectors. Furthermore, for each \(1 \leq j \leq \ell\), the set \(E_{j} := \{x_{m,j}\}_{m \in \mathbb{N}}\) is dense in \(X\).

In other words, we can construct pairwise disjoint countable dense subsets \(\{E_{j}\}_{j=1}^{\ell}\) contained in \(\mathcal{K} \setminus \{0\}\), such that \(\bigcup_{j=1}^{\ell} E_{j}\) forms a linearly independent set.

We claim that 
\begin{align*}
\mathrm{span}\left(\prod_{j=1}^{\ell} E_{j}\right) \setminus \{0\} \subset \mathcal{F}_{i}\Psi_{i}\left((T_{i,n})_{n}: \ell\right), \forall i\in \mathbb{N}.
\end{align*}
To show this, fix \(i\in \mathbb{N}\). Let \( m \in \mathbb{N} \), \( \{a_{k}\}_{k=1}^{m} \subset \mathbb{C}\setminus \{0\} \), and \( \{y_{k}\}_{k=1}^{m} \subset \prod_{j=1}^{\ell} E_{j} \), where each \( y_k \) is expressed as 
\begin{align*}
y_{k} = (z_{k,1}, \ldots, z_{k,\ell}),
\end{align*}
with \( z_{k,j} \in E_{j} \) for all \( k \).
		
Let \( q = \sum_{k=1}^{m} a_{k} y_{k} \). Clearly, \( q \neq 0 \). Moreover, \( q \) can be written as 
\begin{align*}
q = \left(q_{1}, \ldots, q_{\ell}\right)\;\; \text{with}\quad q_{i} = \sum_{k=1}^{m} a_{k} z_{k,j}.
\end{align*}		
For any \(\displaystyle{\prod_{j=1}^{\ell} A_{j,i} \in \Psi_{i}(q)}\), where \( A_{j,i} \in \Psi_{i}\left(\sum_{k=1}^{m} a_{k} z_{k,j}\right)\) for each \( j \), the \(D\)-phenomenon \(\Psi_{i}\) implies the existence of non-empty open sets \( W_{k,j,i} \subset X \) such that \( W_{k,j,i} \in \Psi_{i}(z_{k,j}) \) and 
\begin{align*}
\sum_{k=1}^{m} a_{k} W_{k,j,i} \subset A_{j,i}.
\end{align*}
Next, since \( (z_{k,j})_{k,j} \in \mathcal{F}_{i}\Psi_{i}\left((T_{i,n})_{n}: m\times \ell\right) \), we have
		\begin{align*}
			\{ n \in \mathbb{N}: T_n z_{k,j} \in W_{k,j,i}, \ \forall k,j \} \in \mathcal{F}_{i}.
		\end{align*}
		Thus, we obtain the following inclusions:
		\begin{align*}
			\left\{ n \in \mathbb{N}: T_{i,n} z_{k,j} \in W_{k,j,i}, \ \forall k,j \right\} 
			& \subseteq \left\{ n \in \mathbb{N}: \sum_{k=1}^{m} a_{k} T_{i,n} (z_{i}) \in \sum_{k=1}^{m} a_{k} W_{k,j,i}, \ \forall j \right\} \\
			& \subseteq \left\{ n \in \mathbb{N}: (T_{i,n}(q_1), \ldots, T_{i,n}(q_{\ell})) \in \prod_{j=1}^{\ell} A_{j,i} \right\}.
		\end{align*}
		This implies that the last set in the previous inclusions belongs to \(\mathcal{F}_{i}\), i.e., \( q \in \mathcal{F}_{i}\Psi_{i}\left((T_{i,n})_{n}: \ell\right)\) for all \(i\in \mathbb{N}\). This concludes the proof.
\end{proof}

\begin{corollary}
Let \(X\) be an infinite-dimensional separable \(F\)-space, and let \(\Lambda \subset \mathcal{L}(X)\) be a countable collection of quasi-rigid operators. Then,
\[
\bigcap_{T \in \Lambda} \mathrm{Rec}\left(T\right)
\]
is dense lineable for each \(m\in \mathbb{N}\).
\end{corollary}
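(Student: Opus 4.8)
The plan is to derive this corollary directly from Theorem \ref{mainthm} by recognizing recurrence as an instance of the $\mathcal{F}$-$D$-phenomenon framework and verifying that the residuality hypothesis (\ref{resi}) holds for quasi-rigid operators. First I would enumerate the countable collection as $\Lambda = \{T_i\}_{i \in \mathbb{N}}$ and, for each $i$, set $Y_i := X$ together with the sequence of iterates $(T_{i,n})_n := (T_i^n)_n$. I would take $\Psi_i := \Psi^{\mathrm{Rec}}$ (the recurrence $D$-phenomenon from (\ref{Li-rec})) and $\mathcal{F}_i := \mathcal{F}_{\mathrm{inf}}$, the Furstenberg family of infinite subsets of $\mathbb{N}_0$. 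Under these choices, $\mathcal{F}_i\Psi_i((T_{i,n})_n : m)$ coincides with $\mathcal{F}_{\mathrm{inf}}\mathrm{Rec}(\bigoplus_{j=1}^m T_i) = \mathrm{Rec}(\bigoplus_{j=1}^m T_i)$, using the product identity $\mathcal{F}\Psi^{\mathrm{Rec}}(T;m) = \mathcal{F}\mathrm{Rec}(\bigoplus_{j=1}^m T)$ recorded in the remark following (\ref{k-fold}).

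The crucial verification is that condition (\ref{resi}) holds, i.e., that $\mathrm{Rec}(\bigoplus_{j=1}^m T_i)$ contains a residual subset of $X^m$ for every $m, i \in \mathbb{N}$. Here I would invoke the characterization recalled in the remark just before the proof of Theorem \ref{mainthm}: for a separable $F$-space $X$, an operator $T$ is quasi-rigid if and only if $\bigoplus_{j=1}^m T$ is recurrent for every $m \in \mathbb{N}$, equivalently if and only if $\mathrm{Rec}(\bigoplus_{j=1}^m T)$ is residual for every $m$ (this is the equivalence established in \cite{grivaux2025questions, manuel}). Since every $T_i \in \Lambda$ is quasi-rigid by hypothesis, this immediately yields that $\mathrm{Rec}(\bigoplus_{j=1}^m T_i)$ is residual in $X^m$ for all $m, i$, which is precisely (\ref{resi}).

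With the hypotheses of Theorem \ref{mainthm} verified, I would apply the theorem to conclude that
\[
\bigcap_{i \in \mathbb{N}} \mathcal{F}_i \Psi_i\bigl((T_{i,n})_n : m\bigr) = \bigcap_{T \in \Lambda} \mathrm{Rec}\Bigl(\bigoplus_{j=1}^m T\Bigr)
\]
is dense-lineable for each $m \in \mathbb{N}$. Taking $m = 1$ gives the stated conclusion that $\bigcap_{T \in \Lambda} \mathrm{Rec}(T)$ is dense-lineable; the general $m$ records the stronger simultaneous statement for the $m$-fold direct sums. I would also note, as promised in the introduction, that the dense subspace $E$ produced by the Mycielski construction in the proof of Theorem \ref{mainthm} in fact lies in $\{x : \lim_n T^{k_{n,T}}x = x\}$ for a suitable sequence $(k_{n,T})_n \uparrow \infty$ depending on $T$, since membership in $\mathcal{F}_{\mathrm{inf}}\mathrm{Rec}(T)$ for every neighborhood of $x$ furnishes exactly such a return sequence.

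The main obstacle, which is entirely dispatched by the cited equivalence, is the passage from the single-operator hypothesis (each $T$ quasi-rigid) to the residuality of the higher direct-sum recurrent sets; without the quasi-rigidity-versus-product-recurrence equivalence one could only guarantee density of $\mathrm{Rec}(T)$ itself, not the residuality of $\mathrm{Rec}(\bigoplus_{j=1}^m T)$ needed to feed into Theorem \ref{mainthm}. Everything else is a routine matching of definitions.
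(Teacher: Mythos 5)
Your proposal is correct and follows essentially the same route as the paper: the paper's proof simply cites \cite{grivaux2025questions, manuel} for the residuality of $\mathrm{Rec}\bigl(\bigoplus_{i=1}^{m}T\bigr)$ in $X^{m}$ (the quasi-rigidity characterization you invoke) and then applies Theorem \ref{mainthm}. Your write-up merely makes explicit the instantiation $\Psi_i = \Psi^{\mathrm{Rec}}$, $\mathcal{F}_i = \mathcal{F}_{\mathrm{inf}}$ and the identification $\mathcal{F}\Psi^{\mathrm{Rec}}(T;m) = \mathcal{F}\mathrm{Rec}\bigl(\bigoplus_{j=1}^m T\bigr)$, which the paper leaves implicit.
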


\begin{proof}
According to \cite{grivaux2025questions, manuel}, for each \(m \in \mathbb{N}\) and \(T \in \Lambda\), the set
\begin{align*}
    \mathrm{Rec}(\bigoplus_{i=1}^{m}T) 
\end{align*}
is residual in \(X^{m}\). The proof follows from Theorem \ref{mainthm}.
\end{proof}

\begin{proposition}
Let \(X\) be an infinite-dimensional separable \(F\)-space, and let \(\Lambda \subset \mathcal{L}(X)\) be a countable collection. Suppose that for each \(T \in \Lambda\), there exists an operator \(h_{T} \in \mathcal{L}(X)\) and a sequence \((\theta_{n,T})_{n} \uparrow \infty\) such that 
\begin{align}\label{seque-dense}
    \{x \in X : \lim_{n \to \infty} T^{\theta_{n,T}} x = h_{T}x\}
\end{align}
is dense in \(X\) for each \(T \in \Lambda\). Then there exists a dense vector subspace \(E\) of \(X\), and for each \(T \in \Lambda\), there exists a sequence \((k_{n,T})_{n} \uparrow \infty\) such that 
\begin{align*}
    E \subset \{x \in X : \lim_{n \to \infty} T^{k_{n,T}} x = h_{T}x\}, \quad \forall\, T \in \Lambda.
\end{align*}
\end{proposition}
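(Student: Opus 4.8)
The plan is to deduce this proposition as a direct application of Theorem \ref{mainthm} by recognizing that each hypothesis \eqref{seque-dense} is precisely a $D$-phenomenon of the type $\Psi^{h_T}$ introduced in \eqref{Li-h}, paired with a suitable Furstenberg family. For each $T \in \Lambda$, consider the sequence of operators $(T^n \colon X \to X)_n$ together with the $D$-phenomenon $\Psi^{h_T}$, which assigns to each $x$ the collection of open neighborhoods of $h_T(x)$. The condition $\lim_n T^{\theta_{n,T}}x = h_T x$ along some increasing sequence $\theta_{n,T} \uparrow \infty$ is equivalent to saying that for every neighborhood $V$ of $h_T(x)$, the set $\{n : T^n x \in V\}$ is infinite. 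Thus the appropriate Furstenberg family is $\mathcal{F}_{\mathrm{inf}}$, the family of all infinite subsets of $\mathbb{N}$, and the hypothesis says that $\{x : \lim_n T^{\theta_{n,T}}x = h_T x\} \subseteq \mathcal{F}_{\mathrm{inf}}\Psi^{h_T}((T^n)_n)$.

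The central step is to verify the residuality condition \eqref{resi} of Theorem \ref{mainthm}. For each fixed $T$ and each $m \in \mathbb{N}$, I would show that $\mathcal{F}_{\mathrm{inf}}\Psi^{h_T}((T^n)_n : m)$ contains a residual subset of $X^m$. Here the key observation is that this set equals $\mathcal{F}_{\mathrm{inf}}\mathrm{Rec}$ of the shifted system, or more directly, that the set of $(x_1,\dots,x_m)$ admitting a common sequence $\theta_\ell \uparrow \infty$ with $T^{\theta_\ell}x_j \to h_T x_j$ for all $j$ simultaneously is a dense $G_\delta$. Denseness follows from the hypothesis that \eqref{seque-dense} is dense in $X$ and a standard argument that the $m$-fold version inherits density; the $G_\delta$ structure comes from writing the condition as
\[
\bigcap_{k \in \mathbb{N}} \bigcup_{n \in \mathbb{N}} \bigcap_{j=1}^{m} \{(x_1,\dots,x_m) : \|T^n x_j - h_T x_j\| < \tfrac{1}{k}\},
\]
which exhibits it as a countable intersection of open sets. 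The density of the $m$-fold set is the main obstacle, since a priori the sequences $\theta_{n,T}$ witnessing convergence for individual vectors need not be common across coordinates; I would address this by invoking the completeness of $X$ together with a Baire-category or quasi-rigidity-type argument (as in the results of \cite{grivaux2025questions, manuel}) to upgrade pointwise density to residuality of the simultaneous convergence set.

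Once residuality of $\mathcal{F}_{\mathrm{inf}}\Psi^{h_T}((T^n)_n : m)$ is established for every $T \in \Lambda$ and every $m$, Theorem \ref{mainthm} applies directly with the countable index family $\{T\}_{T \in \Lambda}$, the constant target space $Y_i = X$, the $D$-phenomena $\Psi_i = \Psi^{h_{T_i}}$, and the families $\mathcal{F}_i = \mathcal{F}_{\mathrm{inf}}$. The theorem then yields that
\[
\bigcap_{T \in \Lambda} \mathcal{F}_{\mathrm{inf}}\Psi^{h_T}((T^n)_n : m)
\]
is dense-lineable. Specializing to $m = 1$ produces a dense vector subspace $E \subset X$ contained in $\bigcap_{T \in \Lambda}\mathcal{F}_{\mathrm{inf}}\Psi^{h_T}((T^n)_n)$. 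Finally, I would extract the desired sequences: since $E$ is a subspace lying in every $\mathcal{F}_{\mathrm{inf}}\Psi^{h_T}$, and using separability together with a diagonal argument over a countable dense subset of $E$, I can select for each $T \in \Lambda$ a single increasing sequence $(k_{n,T})_n$ along which $T^{k_{n,T}}x \to h_T x$ holds simultaneously for all $x$ in a spanning set of $E$, hence for all $x \in E$ by linearity and continuity. This delivers $E \subset \{x : \lim_n T^{k_{n,T}}x = h_T x\}$ for every $T \in \Lambda$, completing the argument.
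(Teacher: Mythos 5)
Your overall strategy (the $D$-phenomenon $\Psi^{h_T}$, the infinite-sets Furstenberg family, residuality of the $m$-fold sets, and Theorem \ref{mainthm}) matches the paper's, but the final extraction step has a genuine gap. After specializing Theorem \ref{mainthm} to $m=1$ you only know that each individual $x\in E$ lies in $\mathcal{F}_{\mathrm{inf}}\Psi^{h_T}((T^n)_n)$, i.e.\ for each single $x$ the visiting sets $\{n: T^n x\in V\}$ are infinite. That is not enough to run your diagonal argument: for two vectors $x,y$ the infinite sets $\{n : d(T^nx,h_Tx)<\varepsilon\}$ and $\{n : d(T^ny,h_Ty)<\varepsilon\}$ may well be disjoint, so no common subsequence $(k_{n,T})$ need exist even for a pair, let alone for a countable spanning set. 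What the diagonalization actually requires is that every finite tuple of your spanning vectors lies in the corresponding $m$-fold set $\mathcal{F}_{\mathrm{inf}}\Psi^{h_T}((T^n)_n:m)$, and the $m=1$ conclusion of Theorem \ref{mainthm} does not provide this (nor do the $m$-fold conclusions for different $m$ automatically cohere into powers of a single subspace). The paper avoids the problem by not using Theorem \ref{mainthm} as a black box: it applies Mycielski's theorem directly to the residual sets $\mathcal{R}_m=\bigcap_{T\in\Lambda}\Psi^{h_T}\bigl(\bigoplus_{j=1}^m T\bigr)$ to obtain a single set $\mathcal{K}$ with $\mathcal{K}^m\subset\mathcal{R}_m$ for all $m$, chooses the countable dense family $\{x_i\}$ inside $\mathcal{K}$, and only then diagonalizes; simultaneity for finite tuples is then built in, and finite linearity transfers the convergence to all of $E=\mathrm{span}\{x_i\}$. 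Your argument can be repaired the same way, but as written the step ``specialize to $m=1$, then diagonalize'' fails.

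A smaller point: you describe the density of the $m$-fold simultaneous-convergence set as the main obstacle and propose to overcome it by a Baire-category upgrade from pointwise density. Be careful: density of the $m=1$ set does \emph{not} in general imply density of the $m$-fold set --- that failure is precisely the content of the Augé--Tapia examples around Theorem \ref{rec-notdense} and the quasi-rigidity discussion. What saves the argument is that the hypothesis \eqref{seque-dense} fixes \emph{one} sequence $(\theta_{n,T})_n$ working for every $x$ in a dense set $D_T$; hence $D_T^m$ is dense in $X^m$ and is contained in the ($G_\delta$) simultaneous-convergence set, which is therefore residual with no further argument. You should make this explicit rather than appeal to a general upgrading principle that does not exist.
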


\begin{proof}
 For any fixed \(T \in \Lambda\), the density of the set in (\ref{seque-dense}) implies that the set \(\Psi^{h_{T}}\left(\bigoplus_{j=1}^{m} T\right)\) is residual in \(X^{m}\) for each \(m \in \mathbb{N}\).

Now, for each \(m \in \mathbb{N}\), consider the residual set \(\mathcal{R}_{m} \subset X^{m}\) defined by
\begin{align*}
 \mathcal{R}_{m} := \bigcap_{T \in \Lambda} \Psi^{h_{T}}\left(\bigoplus_{j=1}^{m} T\right).
\end{align*}
By Mycielski's theorem, there exists a Mycielski set \(\mathcal{K} \subset X\) such that \(\mathcal{K}^{m} \subset \mathcal{R}_{m}\) for all \(m \in \mathbb{N}\). Due to the separability of \(X\), there exists a countable dense set \(\{x_{i}\}_{i \in \mathbb{N}} \subset \mathcal{K}\). As in the proof of \cite[Theorem 3.3]{manuel}, we can choose a sequence \((k_{n,T})_{n} \uparrow \infty\) such that 
\[
\lim_{n \to \infty} T^{k_{n,T}} x_{i} = h_{T} x_{i}
\]
for all \(i \in \mathbb{N}\) and for all \(T \in \Lambda\). To conclude, it suffices to consider \(E := \mathrm{span}\{x_{i} : i \in \mathbb{N}\}\).
\end{proof}

\subsection*{Acknowledgements}

The first author was partially supported by CAPES, CNPq-Projeto Universal, FAPERJ "Cientista do Nosso Estado" E-26/201.181/2022 and Pronex-Dynamical Systems. The second author was partially supported by CAPES.

\bibliographystyle{amsplain}
\bibliography{dense-lineable}

\end{document}